
\documentclass[12pt]{amsart}
\usepackage{amssymb}
\usepackage{amsthm}
\usepackage{amsmath}
\usepackage[curve,matrix,arrow]{xy}
\usepackage{fullpage}

\hyphenation{ho-mo-mor-phism}
\hyphenation{ho-mo-mor-phisms}
\hyphenation{endo-mor-phism}
\hyphenation{endo-mor-phisms}
\hyphenation{Hochs-child}

\theoremstyle{plain}\newtheorem{Theorem}{Theorem}[section]
\theoremstyle{plain}
\theoremstyle{plain}\newtheorem{Corollary}[Theorem]{Corollary}
\theoremstyle{plain}\newtheorem{Lemma}[Theorem]{Lemma}
\theoremstyle{plain}\newtheorem{Proposition}[Theorem]{Proposition}
\theoremstyle{definition}
\theoremstyle{definition}
\theoremstyle{definition}
\theoremstyle{definition}\newtheorem{Remark}[Theorem]{Remark}
\theoremstyle{plain}

\newcommand{\ooplus}{\mathop{\text{\Large$\oplus$}}}
\newcommand{\HH}{H\!H}
\newcommand{\half}{{\textstyle\frac{1}{2}}}

    \def\OG{{\mathcal{O}G}}  
      
    \def\OP{{\mathcal{O}P}}
    \def\OQ{{\mathcal{O}Q}}

\def\CH{{\mathcal{H}}}

\def\CL{{\mathcal{L}}}

\def\CO{{\mathcal{O}}}


\def\R{{\mathbb R}}
\def\Z{{\mathbb Z}}

\def\ad{\mathrm{ad}}
           \def\tenk{\otimes_k}     
             \def\ten{\otimes}
\def\chr{\mathrm{char}}

\def\Der{\mathrm{Der}}
\def\dim{\mathrm{dim}}

\def\Ext{\mathrm{Ext}}

\def\Hom{\mathrm{Hom}}           

\def\ker{\mathrm{ker}}           
             
\def\IDer{\mathrm{IDer}}
\def\Im{\mathrm{Im}}

           \def\tenO{\otimes_{\mathcal{O}}}

\def\op{\mathrm{op}}

\def\rk{\mathrm{rk}}         
\def\soc{\mathrm{soc}}

\title[On blocks of defect two and one simple module]
{On blocks of defect two and one simple module, and Lie algebra 
structure of $\HH^1$} 
\author{D. J. Benson, Radha Kessar, and Markus Linckelmann} 
\date{\today}

\begin{document}

\maketitle

\begin{abstract}    
Let $k$ be a field of odd prime characteristic $p$.  We 
calculate the Lie algebra structure of the first Hochschild cohomology
of a class of quantum complete intersections over $k$. As a 
consequence, we prove that if $B$ is a  defect $2$-block of a finite 
group algebra $kG$ whose Brauer correspondent $C$ has a unique 
isomorphism class  of simple modules, then a basic 
algebra of $B$ is a local algebra which can be generated by at most $2\sqrt I$ elements, where 
$I$ is the inertial index of $B$, and where we assume that $k$ is
a splitting field for $B$ and $C$.
\end{abstract}

\section{Introduction}

The purpose of this paper is to examine certain algebras of dimension
$p^2$ over a field of odd characteristic $p$, which occur as the basic
algebras of blocks of finite groups with normal defect groups of order
$p^2$ and a unique
simple module. The goal is to understand the
Brauer correspondents of such blocks. To this end, we make a
detailed examination of the degree one Hochschild cohomology
as a Lie algebra. 

\begin{Theorem} \label{HHoneLiestructure}
Let $k$ be a field of odd prime characteristic $p$ and let
$q\in$ $k^\times$ be an element of finite order $e$ such that
$e\geq$ $2$ and such that $e$ divides $p-1$. Let
$$A = k\langle x,y\ |\ x^p=0=y^p, \ yx=qxy\rangle\ .$$
Set $\CL=$ $\HH^1(A)$ and let $\CL'$ be the derived Lie subalgebra 
of the  Lie algebra $\CL$. Denote by $\soc_{Z(A)}(\CL)$
the socle of $\CL$ as a left $Z(A)$-module. Then $A$ is a split
local symmetric $k$-algebra of dimension $p^2$, and 
the following hold.
\begin{enumerate}
\item[\rm (i)]
We have $\dim_k(\CL)=$ $2(p+(\frac{p-1}{e})^2)$.
\item[\rm (ii)]
We have $Z(\CL)=\{0\}$.
\item[\rm (iii)] 
There is a $2$-dimensional maximal toral subalgebra 
$\CH$ of $\CL$ such that $\CL= \CH\oplus \CL'$.
\item[\rm (iv)]
The derived subalgebra $\CL'$ is nilpotent; in 
particular, $\CL$ is solvable. 
\item[\rm (v)]
We have $\dim_k(\soc_{Z(A)}(\CL)) = 2e$ and  
$\soc_{Z(A)}(\CL) \subseteq$ $Z(\CL')$.
\item[\rm (vi)]
We have $J(Z(A))\CL=$ $\CL'$ and $\dim_k(\CL/\CL')=$ $2$.
\item[\rm (vii)]
We have $\dim_k(Z(\CL'))=2e+2$. In particular, $\CL'$ is
abelian if and only if $e=p-1$.
\item[\rm (viii)]
The subalgebra $\CH$ is $p$-toral, and we have $(\CL')^{[p]}=$ 
$\{0\}$.
\end{enumerate}
\end{Theorem}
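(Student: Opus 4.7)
The plan is to identify $\CL = \HH^1(A)$ with $\Der(A)/\IDer(A)$, equipped with the commutator bracket and the $p$-power restricted operation, and to extract the Lie-algebraic structure from an explicit weight-space decomposition under a toral Cartan. A derivation $D$ is determined by $D(x), D(y) \in A$ subject to $D(x^p) = 0$, $D(y^p) = 0$, $D(yx) = qD(xy)$, expanded via Leibniz and the commutation identity $y^j x^i = q^{ij} x^i y^j$. Writing $D(x) = \sum c_{ij} x^i y^j$ and $D(y) = \sum d_{ij} x^i y^j$, the three conditions reduce to an explicit linear system in the $c_{ij}, d_{ij}$, which one solves using the character sum $\sum_{\ell=0}^{p-1}q^{b\ell}$ (equal to $0$ when $e\mid b$ and to $1$ otherwise). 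The dimension (i) then follows from subtracting the inner derivations; here one must compute $\dim_k Z(A)$ carefully, noting that beyond the ``bulk'' central elements $x^ay^b$ with $e\mid a$ and $e\mid b$ there are additional ``boundary'' central elements $x^{p-1}y^b$ and $x^ay^{p-1}$ arising from exponent overflow past $p-1$.

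For (iii) and (viii), let $\CH$ be the $2$-dimensional span of the classes of $\sigma_x\colon x\mapsto x,\ y\mapsto 0$ and $\sigma_y\colon x\mapsto 0,\ y\mapsto y$. These commute in $\Der(A)$ and, as $k$-linear maps on $A$, are diagonalisable with $\F_p$-valued eigenvalues, so $\sigma^{[p]} = \sigma$ for each; hence $\CH$ is an abelian $p$-toral subalgebra. Under the adjoint action of $\CH$, the algebra $\CL$ decomposes into weight spaces $\CL_{(m,n)}$ indexed by $(m,n) \in \F_p^2$. The only weight-$(0,0)$ derivations of $\Der(A)$ are scalar combinations of $\sigma_x, \sigma_y$, and no nonzero inner derivation has weight zero, so $\CL_{(0,0)} = \CH$. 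Setting $\CL' = \bigoplus_{(m,n)\ne(0,0)}\CL_{(m,n)}$ then yields the decomposition $\CL = \CH\oplus \CL'$ in (iii); moreover $J(Z(A))$ acts on the weight decomposition by strictly nontrivial shifts, so $J(Z(A))\CL = \CL'$, giving (vi). For (ii), one exhibits for every nonzero $c\sigma_x + d\sigma_y\in\CH$ an occupied weight $(m,n)$ with $cm+dn \ne 0 \pmod p$, whence $Z(\CL)=\{0\}$.

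The structural statements (iv), (v), (vii) are established by direct bracket and $Z(A)$-action computations within the weight decomposition. Nilpotency of $\CL'$ in (iv) follows from $[\CL_{(m,n)},\CL_{(m',n')}]\subseteq \CL_{(m+m',n+n')}$ combined with the fact that the support of $\CL'$ in $\F_p^2$ lies in the union of two families of affine lines (coming from the ``$D(x)$''-type and ``$D(y)$''-type derivations), so iterated bracketing accumulates weights that eventually fall outside this support. For (v), the $Z(A)$-action on $\CL$ is multiplication on target monomials; a direct computation identifies $\soc_{Z(A)}(\CL)$ as a $2e$-dimensional subspace consisting of ``extremal'' weight vectors, and these are central in $\CL'$ because any bracket with them would require a target monomial with exponents exceeding $p-1$. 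The count $\dim_k Z(\CL')=2e+2$ in (vii) contributes two further central classes, to be identified by direct weight-by-weight computation. Finally, $(\CL')^{[p]}=0$ in (viii) holds because every nonzero-weight monomial derivation shifts a natural $\Z$-grading on $A$ by a nonzero amount, hence is $p$-nilpotent as a $k$-linear endomorphism of $A$.

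The main obstacle will be the careful bookkeeping throughout: in each weight one must identify the genuine derivations (handling the exponent ``wraparound'' cases $i=p-1$ or $j=p-1$), subtract the inner derivations (noting that $Z(A)$ is larger than one might naively expect), and verify the precise numerical relations in (v)--(vii). All the combinatorial formulas involving $e$ and $(p-1)/e$ reduce ultimately to counting pairs $(i,j)\in\{0,\ldots,p-1\}^2$ satisfying the residue conditions imposed by $yx=qxy$.
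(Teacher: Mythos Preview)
Your overall strategy coincides with the paper's: your $\sigma_x,\sigma_y$ are the paper's $f_{1,0},g_{0,1}$, and your weight-space decomposition under $\CH$ is precisely the paper's organization of the explicit basis $X$ via the adjoint action computed in its Lemma~5.5. However, two of your arguments would not go through as written.

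For (iv), the $\F_p^2$-weight argument cannot yield nilpotency: since weights are taken modulo $p$, iterated brackets can cycle, and there is no reason the support of $\CL'$ (your ``union of affine lines'') should fail to be closed under addition in $\F_p^2$. The paper instead uses the integer filtration $\CL_m$ spanned by those $f_{a,b},g_{c,d}$ with $a+b\ge m$ (resp.\ $c+d\ge m$); the explicit bracket formulas give $[\CL',\CL_m]\subseteq\CL_{m+1}$ and $\CL_m=0$ for $m\ge 2p$. For (viii), a nonzero $\Z$-degree shift is not enough: a $p$-fold composition of maps each raising total degree by $1$ raises degree by only $p$, while $A$ has nonzero monomials up to degree $2p-2$. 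The paper shows that every element of $X'=X\setminus\{f_{1,0},g_{0,1}\}$ actually raises total degree by at least $\min(e,p-2)$; for $p\ge 5$ this is $\ge 2$, so any $p$-fold composition of elements of $X'$ raises degree by $\ge 2p>2p-2$ and vanishes, giving $f^{[p]}=0$ for every $f\in\CL'$. For $p=3$ the bound drops to $1$ and the paper checks directly that all $3$-fold compositions of the six elements of $X'$ vanish. A smaller point: the identification of $\CL'$ with $\bigoplus_{(m,n)\ne(0,0)}\CL_{(m,n)}$ is not automatic, since brackets $[\CL_{(m,n)},\CL_{(-m,-n)}]$ land in weight $(0,0)$; one must verify these are inner, and there is exactly one nontrivial such bracket, namely $[f_{0,p-1},g_{p-1,0}]$.
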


See Section \ref{LieStructure} for the proof.
Other papers examining Hochschild cohomology of similar algebras 
include Bergh and Erdmann \cite{BeErd} and
Oppermann \cite{Opp}, but their results and goals lie in different
directions. For example, in \cite{BeErd} it is assumed that $q$ is not
a root of unity. The last statement in Theorem \ref{HHoneLiestructure}
regarding the $p$-restricted structure of $\CL$ is motivated by
invariance results of $p$-power maps in Hochschild cohomology under
derived and stable equivalences in work of Zimmermann \cite{Zimm} and 
Rubio y Degrassi \cite{Rubio}.

To exploit Theorem \ref{HHoneLiestructure} we prove the following 
general theorem, which provides an upper bound for the number of loops
in the quiver of a symmetric split algebra over an arbitrary field.

\begin{Theorem} \label{soclederivationI}
Let $k$ be a field and let $A$ be a symmetric split $k$-algebra. We have
$$\sum_S\ \dim_k(\Ext^1_A(S,S)) \leq \dim_k(\soc_{Z(A)}(\HH^1(A)))$$
where in the sum $S$ runs over a set of representatives of
the isomorphism classes of simple $A$-modules. In particular,
if $A$ is a symmetric split local $k$-algebra, then
$$\dim_k(J(A)/J(A)^2) \leq \dim_k(\soc_{Z(A)}(\HH^1(A)))\ .$$
\end{Theorem}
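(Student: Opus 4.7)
The plan is to construct, for each loop in the Gabriel quiver of $A$, an explicit derivation lying in $\soc_{Z(A)}(\HH^1(A))$, and to show these are linearly independent modulo inner derivations. Since both sides of the inequality are Morita invariant, I first reduce to the case that $A$ is basic, and write $A = kQ/I$ with $Q$ the Gabriel quiver, $\{e_i\}$ the primitive orthogonal idempotents indexed by the vertices of $Q$, and $I$ an admissible ideal, so that each element of $I$ is a $k$-linear combination of paths of length at least $2$. The key structural input is that, for $A$ symmetric basic split, the bimodule socle decomposes as $\soc(A) = \bigoplus_i k\,s_i$ with $s_i \in e_i A e_i$, and that the restriction of the symmetrizing form $\lambda$ to $e_i A e_i$ is a nondegenerate symmetrizing form on the local algebra $e_i A e_i$; since $s_i$ annihilates $J(e_i A e_i)$ on both sides, this forces $\lambda(s_i) \ne 0$.

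For each loop $\ell$ at a vertex $i$, I would define a $k$-linear map $d_\ell : A \to A$ by setting $d_\ell(e_j) = 0$ for every $j$, $d_\ell(\ell) = s_i$, and $d_\ell(\alpha) = 0$ for every arrow $\alpha \ne \ell$, and then extending by the Leibniz rule. The first task is to check that $d_\ell$ descends to a well-defined derivation of $A = kQ/I$: each relation in $I$ is a combination of paths of length at least $2$, and applying the Leibniz rule to such a path produces only terms in which the socle element $s_i$ is multiplied on one side by a non-empty subpath, hence by an element of $J(A)$. Since $J(A) \cdot \soc(A) = \soc(A) \cdot J(A) = 0$, all these terms vanish in $A$. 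The same computation shows that $d_\ell(A) \subseteq \soc(A)$, and this immediately gives that $d_\ell$ represents a class in $\soc_{Z(A)}(\HH^1(A))$: for any $z \in J(Z(A)) \subseteq J(A)$ one has $(z \cdot d_\ell)(a) = z\, d_\ell(a) \in J(A) \cdot \soc(A) = 0$.

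The main technical step is the linear independence of the $d_\ell$ modulo $\IDer(A)$. Suppose $\sum_\ell c_\ell\, d_\ell = \ad(c)$ in $\Der(A)$ for some $c \in A$ and scalars $c_\ell \in k$. Evaluating on each $e_a$ gives $[c, e_a] = 0$, which forces $c = \sum_a e_a c e_a$. Evaluating on a loop $\ell$ at $m$ then yields $c_\ell\, s_m = [e_m c e_m,\ell]$ inside $e_m A e_m$. Applying the symmetrizing form, which vanishes on all commutators, we obtain $c_\ell\, \lambda(s_m) = 0$, and since $\lambda(s_m) \ne 0$ this forces $c_\ell = 0$ for every loop $\ell$.

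These steps together produce a $k$-linear injection $\bigoplus_i \Ext^1_A(S_i, S_i) \hookrightarrow \soc_{Z(A)}(\HH^1(A))$, which gives the desired inequality; the local case is immediate by specialising to the unique simple module. The main obstacle is the verification that each $d_\ell$ is a well-defined derivation of $A = kQ/I$: this is where the hypotheses that $A$ is symmetric (which gives $\soc(A) = \bigoplus_i k\,s_i$ with $\lambda(s_i) \ne 0$, and which also drives the final linear independence computation) and that $I \subseteq J(kQ)^2$ (standard for the admissible presentation of a basic algebra) both enter crucially.
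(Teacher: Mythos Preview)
Your proposal is correct and is essentially the paper's own argument, recast in quiver language: the paper constructs the same derivations abstractly as $E$-$E$-bimodule maps $J(A)/J(A)^2 \to \soc(A)$ (via Wedderburn--Malcev) rather than loop by loop, and proves outer-ness via the observation that $[A,A]$ contains no nonzero ideal, which unwinds to exactly your symmetrising-form computation $\lambda(s_m)\ne 0$. The only cosmetic difference is that the paper does not need to reduce to the basic case or invoke a quiver presentation, but the underlying construction and the key use of symmetry are identical.
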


This will be proved in Theorem \ref{soclederivation} and
Corollary \ref{soclederivationlocal}. Combining the two theorems above
with standard properties of stable equivalences of Morita
type yields the following consequence.

\begin{Corollary} \label{radicaldimCor}
Let $A$ be as in Theorem \ref{HHoneLiestructure}, and let $B$ be a 
split local symmetric $k$-algebra such that there is a stable 
equivalence of Morita type between $A$ and $B$. We have
$$\dim_k(J(B)/J(B)^2)\leq 2e\ .$$
\end{Corollary}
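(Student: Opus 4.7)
\smallskip
\noindent\textbf{Proof proposal.} The plan is to combine Theorem \ref{HHoneLiestructure}(v) with Theorem \ref{soclederivationI} and then transport the relevant socle invariant across the stable equivalence. Applying the ``in particular'' clause of Theorem \ref{soclederivationI} to the split local symmetric algebra $B$ yields
\[
\dim_k(J(B)/J(B)^2) \leq \dim_k(\soc_{Z(B)}(\HH^1(B))),
\]
while Theorem \ref{HHoneLiestructure}(v) gives $\dim_k(\soc_{Z(A)}(\HH^1(A))) = 2e$. It therefore suffices to establish the equality
\[
\dim_k(\soc_{Z(B)}(\HH^1(B))) = \dim_k(\soc_{Z(A)}(\HH^1(A))).
\]

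For this I would invoke the standard invariance package for symmetric $k$-algebras under a stable equivalence of Morita type: such an equivalence induces an isomorphism of stable centres $\bar Z(A) := Z(A)/Z^{\mathrm{pr}}(A) \cong Z(B)/Z^{\mathrm{pr}}(B) =: \bar Z(B)$, together with graded isomorphisms $\HH^n(A) \cong \HH^n(B)$ for every $n \geq 1$, which intertwine the actions of the stable centres on positive-degree Hochschild cohomology. Here $Z^{\mathrm{pr}}$ denotes the projective centre, that is, the image of the Higman trace $A \to Z(A)$, and it annihilates $\HH^n$ for $n \geq 1$. Since $A$ and $B$ are local and non-semisimple, the projective centres lie in the maximal ideals $J(Z(A))$ and $J(Z(B))$ respectively, so $\bar Z(A)$ and $\bar Z(B)$ remain local. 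Consequently the $Z(A)$-socle of $\HH^1(A)$ coincides with its $\bar Z(A)$-socle, and similarly for $B$; the desired equality then follows from the compatible isomorphisms.

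The main obstacle is to verify that the isomorphism $\HH^1(A) \cong \HH^1(B)$ produced by the stable equivalence really does intertwine the stable-centre actions, so that ``annihilation by the maximal ideal'' is transported correctly. The subsidiary containment $Z^{\mathrm{pr}}(A) \subseteq J(Z(A))$ in the local non-semisimple case is straightforward: $1 \in Z^{\mathrm{pr}}(A)$ would force $A$ to be separable over $k$, hence equal to $k$ in the split local case, contradicting $\dim_k(A) = p^2$. If these invariance and compatibility statements are not at hand in quite the form needed, one can alternatively work directly with a stable-equivalence bimodule $M$ and a converse bimodule $N$ with $M \otimes_B N \cong A \oplus P$ and $N \otimes_A M \cong B \oplus Q$ for projective bimodules $P, Q$, and use that the induced map $\HH^1(A) \to \HH^1(B)$ is $\bar Z$-linear modulo projective summands to reduce to the same conclusion.
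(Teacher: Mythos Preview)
Your proposal is correct and follows essentially the same route as the paper: apply Corollary~\ref{soclederivationlocal} to $B$, use Theorem~\ref{HHoneLiestructure}(v) for $A$, and transport $\soc_{Z(-)}(\HH^1(-))$ across the stable equivalence via the stable centre, exactly as recorded in Remark~\ref{soclederivationRemark}. One minor simplification: the equality $\soc_{Z(A)}(\HH^1(A))=\soc_{\bar Z(A)}(\HH^1(A))$ follows immediately from the fact that $Z^{\mathrm{pr}}(A)$ annihilates $\HH^1(A)$ (the socle of a module is unchanged when the acting ring is replaced by its quotient by an annihilating ideal), so your discussion of locality and of $Z^{\mathrm{pr}}\subseteq J(Z)$ is not actually needed.
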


The motivation for the above results comes from local-global 
considerations in the modular representation theory of finite groups. 
Let $G$ be a finite group  and let $B$  be a block of  the group 
algebra $kG$ of $G$ over a field $k$ of odd characteristic. Let $P$ be 
a defect group of $B$, $C$ the  
block of of $kN_G(P)$ in Brauer correspondence with $B$ and let $I$ be 
the inertial index of $B$. Suppose that $k$ is a splitting field for
$B$ and $C$. If $P$ has order $p^2$, then it is known     
that there is a stable equivalence of Morita type between $B$ and $C$.  
If in addition $C$ has a unique isomorphism class of simple modules, 
then $C$ is a matrix algebra over a quantum complete intersection as 
in Corollary~\ref{radicaldimCor}. Moreover, in this case 
$e \leq \sqrt I $ and if $I>1 $, then $e > 1$. Thus, Corollary  
\ref{radicaldimCor} yields the following local-global  result. 

\begin{Corollary} \label{blockradicaldimCor} 
Let $G$ be a finite group  and let $B$ be a block of the group algebra 
$kG$ of $G$ over a field $k$ of odd characteristic $p$. Let $P$ be 
defect group of $B$, $C$ the  block 
of $kN_G(P)$ in Brauer correspondence with $B$ and let $I$ be the 
inertial index of $B$. Suppose that $P$  has  order 
$p^2$, that $C$ has a unique isomorphism class of simple modules, 
and that $k$ is a splitting field for $B$ and $C$. Then $B$ has a
unique isomorphism class of simple modules, and
$$\dim_k(J(B)/J(B)^2)\leq 2\sqrt I\ .$$
\end{Corollary}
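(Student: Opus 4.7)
The plan is to combine the block-theoretic reductions recalled in the paragraph preceding the statement with Corollary~\ref{radicaldimCor}. If $I=1$ then $B$ is a nilpotent block and Puig's theorem identifies $B$ up to Morita equivalence with $kP$, yielding $\dim_k(J(B)/J(B)^2) \leq 2 = 2\sqrt{1}$; so I assume $I \geq 2$. Since $p$ is odd and $|P|=p^2$, the standard structure theory gives a stable equivalence of Morita type between $B$ and its Brauer correspondent $C$. Such an equivalence preserves the number of non-projective simple modules, and no simple module of a block of strictly positive defect is projective; since $C$ has a unique isomorphism class of simple modules, so does $B$. Both $B$ and $C$ are symmetric $k$-algebras and $k$ is a splitting field, so each is Morita equivalent to its basic algebra, which is a split local symmetric $k$-algebra; denote these $\bar B$ and $\bar C$ respectively. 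The stable equivalence of Morita type between $B$ and $C$ descends to one between $\bar B$ and $\bar C$.

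The principal obstacle is the identification of $\bar C$ with the quantum complete intersection
$$A = k\langle x, y \mid x^p = y^p = 0,\ yx = qxy\rangle$$
of Theorem~\ref{HHoneLiestructure}, with $q \in k^\times$ of finite order $e \geq 2$ dividing $p-1$. This is asserted as input in the paragraph preceding the statement; it rests on a detailed analysis of blocks with normal defect group of order $p^2$ and a unique simple module (via K\"ulshammer-type descriptions and the $p'$-action of the inertial quotient on the elementary abelian defect group). I would take this identification as given, noting that the same discussion provides the inertial-index inequality $e \leq \sqrt{I}$.

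Granting $\bar C \cong A$, Corollary~\ref{radicaldimCor} applied to $\bar B$ (which is split local symmetric and stably equivalent of Morita type to $A$) yields
$$\dim_k(J(\bar B)/J(\bar B)^2) \leq 2e.$$
Since $B$ and $\bar B$ are Morita equivalent, their Jacobson radical quotients have the same $k$-dimension, and combining with $e \leq \sqrt{I}$ we conclude
$$\dim_k(J(B)/J(B)^2) \leq 2e \leq 2\sqrt{I},$$
as required.
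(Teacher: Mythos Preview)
Your overall strategy matches the paper's: dispose of the nilpotent case directly via Puig's theorem, and in the non-nilpotent case invoke the identification of a basic algebra of $C$ with a quantum complete intersection $A$ (with $1<e\leq\sqrt{I}$), the existence of a stable equivalence of Morita type between $B$ and $C$, and then apply Corollary~\ref{radicaldimCor}. Your argument that $B$ has a unique simple module---a stable equivalence of Morita type preserves the number of isomorphism classes of non-projective simples, and no simple module over a block of positive defect is projective---is a legitimate alternative to the paper's direct citation of \cite{KeLi}.

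There is one genuine slip, however. The sentence ``since $B$ and $\bar B$ are Morita equivalent, their Jacobson radical quotients have the same $k$-dimension'' is false: if $B\cong M_n(\bar B)$ then $\dim_k\bigl(J(B)/J(B)^2\bigr)=n^2\dim_k\bigl(J(\bar B)/J(\bar B)^2\bigr)$, so this quantity is not a Morita invariant. What you have actually established is the bound for the basic algebra $\bar B$. In fact the paper's own statement should be read this way too: the abstract phrases the conclusion as ``a basic algebra of $B$ \dots\ can be generated by at most $2\sqrt{I}$ elements'', and in the nilpotent case the paper records $\dim_k\bigl(J(B)/J(B)^2\bigr)\in\{1,2\}$ while only asserting that $B$ is a \emph{matrix algebra} over $kP$. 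So the fix is simply to drop your final transfer step and state the conclusion for $\bar B$.
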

 
Corollaries \ref{radicaldimCor} and \ref{blockradicaldimCor}
are proved at the end of Section \ref{LieStructure}.
We note that in the situation of Corollary
\ref{blockradicaldimCor}, 
Brou\'e's abelian defect group conjecture 
\cite{BroueAb} would imply 
that the blocks $B$ 
and $C$ are derived equivalent, and therefore by a result of
Roggenkamp and Zimmermann \cite[Proposition 6.7.4]{ZimBook}, that $B$ and $C$ are
Morita equivalent. Hence, it would follow that the dimension of
$J(B)/J(B)^2$ is two. If $p=3 $, it is known that $B$ and $C$ are
Morita equivalent in this situation \cite{Ke12}.

If $e=2$, then the algebra $A$ in Theorem \ref{HHoneLiestructure} is
Morita equivalent to the nonprincipal block algebra of the finite
group algebra $kG$, where $G=$ $(C_p\times C_p)\rtimes Q_8$,
with $Z(Q_8)$ acting trivially on $C_p\times C_p$, such that
the induced action of $Q_8/Z(Q_8)\cong$ $C_2\times C_2$ is given
by each copy of $C_2$ acting by inversion on the corresponding
copy of $C_p$. Thus $A$ lifts to an $\CO$-free $\CO$-algebra
$\hat A$
which is Morita equivalent to the nonprincipal block
$B_1$ of $\OG$. Here $\CO$ is a complete discrete valuation ring
of characteristic zero with residue field $k$ of odd prime 
characteristic $p$; we assume that $\CO$ contains a primitive $4p$-th 
root of unity.  This algebra $\hat A$ can be described, using the
normalised polynomials $f_n(u)=2T_n(\frac{u}{2})$ of the
Chebyshev polynomials of the first kind $T_n$ (see \S
\ref{liftingSection} for a more detailed review of the notation).

\begin{Theorem} \label{liftO}
With the notation above, the $\CO$-algebra
\[ \hat A = \CO\langle \gamma, \delta \ |\ \gamma\delta+\delta\gamma=0,
\ f_p(\gamma)=0=f_p(\delta)\rangle \]
is a basic algebra of $B_1$. In particular, we have
$k\tenO \hat A\cong$ $A$.
\end{Theorem}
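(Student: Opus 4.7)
The plan is to realize a basic algebra of $B_1$ as the corner $eB_1 e$ for an explicit primitive idempotent $e$, and to identify generators of this corner satisfying the presentation of $\hat A$. Since $z := i^2 \in Z(Q_8)$ is central in $G$ and $z\cdot e_- = -e_-$ in $B_1 = \CO G\cdot e_-$ (with $e_- := (1-z)/2$), the relations $i^2 = j^2 = -1$ and $ij = -ji$ hold inside $B_1$. Fix $\zeta \in \CO$ with $\zeta^2 = -1$ and set $e := \frac{1}{2}(1 - \zeta i)\,e_-$. A direct check shows $e^2 = e$ and $jej^{-1} = e_- - e$, so $e$ and its complement in $e_-$ are conjugate; since $B_1$ has a unique simple module, of dimension two, $e$ is primitive and $eB_1 e$ is a basic algebra of $B_1$, $\CO$-free of rank $p^2$.

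Write $X, Y$ for generators of $P$, and set $\tilde\gamma := (X - X^{-1})\,j$ and $\tilde\delta := (Y - Y^{-1})\,i$ in $B_1$. Using that $j$ commutes with $X$, $i$ commutes with $Y$, and the identities $i(X - X^{-1}) = -(X - X^{-1})\,i$ and $j(Y - Y^{-1}) = -(Y - Y^{-1})\,j$, both $\tilde\gamma$ and $\tilde\delta$ commute with $e$. Put $\gamma := \tilde\gamma e$ and $\delta := \tilde\delta e$ in $eB_1 e$. The anticommutation $\gamma\delta + \delta\gamma = 0$ reduces to $(X - X^{-1})(Y - Y^{-1})(ji + ij) = 0$, which holds because $ij = -ji$. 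For $f_p(\gamma) = 0$ set $\eta := Xj$; then $\eta^2 = -X^2$, hence $\eta^{2p} = (-1)^p X^{2p} = -1$, giving $\eta^p + \eta^{-p} = 0$. Since $\tilde\gamma = \eta + \eta^{-1}$, the Chebyshev identity $f_n(w + w^{-1}) = w^n + w^{-n}$ (applied in the commutative subring generated by $\eta$ and $\eta^{-1}$) yields $f_p(\tilde\gamma) = 0$, and therefore $f_p(\gamma) = f_p(\tilde\gamma)\,e = 0$. The argument for $f_p(\delta) = 0$ is analogous, with $\eta' := Yi$.

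These relations define an $\CO$-algebra homomorphism $\phi\colon \hat A \to eB_1 e$. Both sides are $\CO$-free of rank $p^2$: the source by the standard monomial basis $\{\gamma^a \delta^b\}_{0 \leq a, b < p}$ read off from the presentation, and the target from the structure of $B_1$ noted above. It is therefore enough to prove that $\bar\phi := \phi \otimes_\CO k$ is surjective, since a surjection of free $\CO$-modules of the same finite rank is an isomorphism and then so is $\phi$ by Nakayama. The source of $\bar\phi$ equals $k\langle \gamma, \delta \mid \gamma\delta + \delta\gamma = 0,\ \gamma^p = \delta^p = 0\rangle$ because $f_p(u) \equiv u^p \pmod{p}$; this is the algebra $A$ of Theorem~\ref{HHoneLiestructure} with $q = -1$, which already yields the final assertion $k \otimes_\CO \hat A \cong A$. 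The target $k \otimes_\CO eB_1 e$ is a basic algebra of $\bar B_1$, Morita equivalent to $A$ by the identification recalled just before the theorem; in particular it is local of $k$-dimension $p^2$ with $\dim_k J/J^2 = 2$.

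The main obstacle is then to show that $\bar\gamma$ and $\bar\delta$ are linearly independent in $J/J^2$ of the target, for then $\bar\phi$ is surjective on the two-dimensional space $J/J^2$, hence on the whole algebra. The corner $eB_1 e$ carries a natural $\Z/2$-grading, with degree $0$ spanned by $\{eX^aY^b e,\ eX^aY^b ie\}$ and degree $1$ by $\{eX^aY^bje,\ eX^aY^bke\}$; this grading is preserved by multiplication since the subgroup $P\langle i\rangle$ is normal of index $2$ in $G$. The element $\bar\gamma$ lies in degree $1$ and $\bar\delta$ in degree $0$, so any relation $a\bar\gamma + b\bar\delta \in J^2$ splits into the independent conditions $a\bar\gamma \in (J^2)^{(1)}$ and $b\bar\delta \in (J^2)^{(0)}$, and each forces its coefficient to vanish by a short calculation using that $X - X^{-1}$ and $Y - Y^{-1}$ are nonzero in $J(kP)/J(kP)^2$ together with the fact that each graded piece of $J/J^2$ is one-dimensional.
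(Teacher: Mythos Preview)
Your proof is correct and follows the same overall architecture as the paper's: exhibit $\gamma,\delta$ inside (a basic algebra of) $B_1$, verify the three defining relations, and conclude by a rank count. The technical execution differs in several places, all of them legitimate and in some respects cleaner than the paper's.

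First, the paper realizes the basic algebra as the centralizer $\hat A=C_{B_1}(S)=B_1^{Q_8}$ via the tensor decomposition $B_1=S\otimes_\CO\hat A$ from \cite[(7.5)]{Thev}, whereas you pass to an explicit corner $eB_1e$ for a primitive idempotent $e=\tfrac12(1-\zeta i)e_-$. Both give Morita-equivalent (hence isomorphic) local $\CO$-orders, and your elements $\gamma=\tilde\gamma e$, $\delta=\tilde\delta e$ are, up to the harmless factor $e$, the paper's generators.

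Second, your verification of $f_p(\gamma)=0$ is more algebraic: you write $\tilde\gamma=\eta+\eta^{-1}$ with $\eta=Xj$ and invoke the identity $f_n(w+w^{-1})=w^n+w^{-n}$ together with $\eta^{2p}=-1$. The paper instead passes through the formula $T_p(\sin\theta)=\pm\sin(p\theta)$ and the action on rank-one $\CO\langle g\rangle$-modules. Your route avoids any appeal to analytic identities and works uniformly inside the group ring.

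Third, for generation, the paper argues that $\{se_1,te_1,\gamma,\delta\}$ generates $B_1$ and then deduces that $\gamma,\delta$ generate the centralizer. You instead reduce modulo $p$, use Nakayama, and separate $\bar\gamma$ and $\bar\delta$ in $J/J^2$ by a $\Z/2$-grading coming from the index-two subgroup $P\langle i\rangle$. This is a valid alternative. Two minor remarks on your last paragraph: the clause ``together with the fact that each graded piece of $J/J^2$ is one-dimensional'' is superfluous (and, as stated, circular); what you actually use is that $J(e\bar B_1e)^n=eJ(\bar B_1)^n e$ and that $\bar\gamma$, $\bar\delta$ have nonzero images in $J(\bar B_1)/J(\bar B_1)^2$ lying in the $(X-1)$-part and the $(Y-1)$-part respectively. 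Also, your assertion that $\hat A$ is $\CO$-free of rank $p^2$ from the presentation deserves a word of justification (a diamond-lemma check that the three relations form a Gr\"obner basis, using that $f_p$ is an odd polynomial), or alternatively can be recovered a posteriori from the surjection onto the free rank-$p^2$ module $eB_1e$.
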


This will be proved in \S \ref{liftingSection}. If $e>2$, it turns out
that it is much harder to describe $\hat A$.
 
\section{Basic background facts}

Let $k$ be a field. For $A$ a finite-dimensional $k$-algebra, we 
denote by $\ell(A)$ the number of isomorphism classes of simple
$A$-modules. We write $A^e=$ $A\tenk A^\op$. We consider $A^e$-modules
as $A$-$A$-bimodules and vice versa, whenever convenient. We denote 
by  $[A,A]$ the additive commutator space, spanned by 
the set of elements $[a,b]=$ $ab-ba$, with $a$, $b\in$ $A$. 
If $A$ is split local, then every element in $A$ is of the form
$\lambda\cdot 1 + r$ for some $\lambda\in$ $k$ and some $r\in$
$J(A)$. This yields immediately the following well-known fact:

\begin{Lemma} \label{commradtwo}
Let $A$ be a finite-dimensional split local $k$-algebra.
We have $[A,A]\subseteq$ $J(A)^2$.
\end{Lemma}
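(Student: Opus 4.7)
The plan is to proceed by a direct calculation, using only the decomposition $A = k\cdot 1 \oplus J(A)$ as $k$-vector spaces noted immediately before the lemma statement. Since $[A,A]$ is spanned by elements of the form $[a,b]$ with $a,b \in A$, and the commutator bracket is $k$-bilinear, it suffices to show that $[a,b] \in J(A)^2$ for arbitrary $a,b \in A$.

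Write $a = \lambda\cdot 1 + r$ and $b = \mu\cdot 1 + s$ with $\lambda,\mu \in k$ and $r,s \in J(A)$, as permitted by the split local hypothesis. Expanding $ab$ and $ba$, the scalar-scalar terms $\lambda\mu$ cancel, and the mixed scalar-radical terms $\lambda s + \mu r$ cancel as well because the identity $1$ is central. What remains is $[a,b] = rs - sr$, and both $rs$ and $sr$ lie in $J(A)\cdot J(A) = J(A)^2$.

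Hence every generator of $[A,A]$ lies in $J(A)^2$, and the containment $[A,A] \subseteq J(A)^2$ follows. There is no real obstacle here; the only content is the observation that commutators with a scalar vanish, so scalar components of $a$ and $b$ contribute nothing, reducing the question to a product of two radical elements.
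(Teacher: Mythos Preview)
Your proof is correct and is precisely the argument the paper has in mind: the paper does not spell out a proof but simply notes, just before the lemma, that every element of a split local algebra has the form $\lambda\cdot 1 + r$ with $r\in J(A)$, and says this ``yields immediately'' the result. Your expansion of $[a,b]=rs-sr\in J(A)^2$ is exactly that immediate verification.
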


A $k$-algebra $A$ is {\it symmetric} if $A$ is isomorphic
to its $k$-dual $A^\vee$ as an $A$-$A$-bimodule (this implies that
$A$ is finite-dimensional). If $A$ is symmetric, then the socle
of $A$ as a left $A$-module and as a right $A$-module coincide.
If $A$ is also split, then this coincides with the socle of $A$
as an $A$-$A$-bimodule. The image $s\in$ $A^\vee$
of $1_A\in$ $A$ under an $A$-$A$-bimodule isomorphism $A\cong$ $A^\vee$
is called a {\it symmetrising form}. Note that it satisfies
$s(ab)=s(ba)$. 
If $A$ is symmetric with a fixed
choice of a symmetrising form $s$, for any subspace $U$ of $A$ we denote
by $U^\perp$ the subspace consisting of all $a\in$ $A$ satisfying
$s(au)=$ $0$ for all $u\in$ $U$. We have $\dim_k(U)+\dim_k(U^\perp)=$
$\dim_k(A)$, and hence $U^{\perp\perp}=$ $U$.
It is well-known that $[A,A]^\perp=$ $Z(A)$
and that $\soc(A)^\perp=$ $J(A)$. The space $[A,A]$ is contained
in any symmetrising form of $A$. If $A$ is split local 
symmetric, then $\soc(A)$ has dimension $1$ and is the unique minimal 
ideal in $A$; thus, in that case, we have $[A,A]\cap \soc(A)=$ $\{0\}$.

\begin{Lemma} \label{soctwocentral}
Let $A$ be a split local symmetric $k$-algebra. Then
$\soc^2(A)\subseteq$ $Z(A)$.
\end{Lemma}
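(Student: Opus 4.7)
The plan is to show that an arbitrary $a \in \soc^2(A)$ commutes with every $b \in A$. Since $A$ is split local, $b = \lambda \cdot 1 + r$ for some $\lambda\in k$ and $r\in J(A)$, so $[a,b] = [a,r]$, and it suffices to show $[a,r]=0$ for all $r\in J(A)$.

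The key step is to observe that both $ar$ and $ra$ lie in $\soc(A)$. By definition $a \in \soc^2(A)$ means $J(A)\cdot a \subseteq \soc(A)$, so $ra \in \soc(A)$. For the right-hand version $aJ(A) \subseteq \soc(A)$, I will use the symmetric structure: the perpendicular duality $U \mapsto U^\perp$ arising from the symmetrising form $s$ satisfies $s(xy)=s(yx)$, so for any subset $U$ the space $U^\perp$ can be computed as $\{x : s(xU)=0\}$ or $\{x : s(Ux)=0\}$. Applied to the two-sided ideal $U = J(A)^2$, and using the non-degeneracy of the bilinear form $(x,y)\mapsto s(xy)$, one obtains
\[ (J(A)^2)^\perp = \{x\in A : xJ(A)^2=0\} = \{x\in A : J(A)^2 x = 0\}, \]
and both of these coincide with $\soc^2(A)$. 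Thus for $a\in \soc^2(A)$ one has $aJ(A)\subseteq \soc(A)$ as well, giving $ar \in \soc(A)$.

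To finish, $[a,r] = ar - ra$ lies in $\soc(A)$, while simultaneously $[a,r]\in [A,A]$. But in a split local symmetric algebra $\soc(A)$ is one-dimensional, and the symmetrising form $s$ vanishes on $[A,A]$ yet is nonzero on $\soc(A)$ (both facts noted just above the statement); hence $[A,A]\cap \soc(A)=\{0\}$, forcing $[a,r]=0$. This shows $a\in Z(A)$.

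The only nontrivial point is the left/right symmetry of the socle series, and this is essentially a bookkeeping exercise with the non-degenerate form $s(xy)$; no deeper input is needed. I do not expect any serious obstacle.
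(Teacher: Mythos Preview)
Your proof is correct, but it takes a somewhat different route from the paper's. The paper's argument is a one-liner: from Lemma~\ref{commradtwo} one has $[A,A]\subseteq J(A)^2$, and applying the inclusion-reversing map $U\mapsto U^\perp$ (with $(J(A)^2)^\perp=\soc^2(A)$ and $[A,A]^\perp=Z(A)$) immediately gives $\soc^2(A)\subseteq Z(A)$. Your argument instead works elementwise: you show $[\soc^2(A),J(A)]\subseteq \soc(A)\cap [A,A]$ by establishing the left/right symmetry of $\soc^2(A)$ via the symmetrising form, and then invoke $\soc(A)\cap [A,A]=\{0\}$. Both approaches ultimately rest on the perpendicular duality of the symmetric form, but the paper leverages it at the level of subspaces, while you unwind it to annihilator conditions on elements. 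The paper's version is slicker and makes the dependence on Lemma~\ref{commradtwo} transparent; yours is more self-contained and avoids citing that lemma, at the cost of a longer computation. One small remark: your displayed identity $(J(A)^2)^\perp=\{x:xJ(A)^2=0\}$ is correct but deserves a word of justification---it holds because $J(A)^2$ is a right ideal, so $s(xJ(A)^2)=0$ implies $s(xJ(A)^2A)=0$, and non-degeneracy then forces $xJ(A)^2=0$.
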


\begin{proof}
Choose a symmetrising form of $A$. The statement follows from 
Lemma \ref{commradtwo}, since $(J(A)^2)^\perp=\soc^2(A)$ and
$[A,A]^\perp=Z(A)$.
\end{proof}

For $A$ a split finite-dimensional $k$-algebra, the semisimple
quotient $A/J(A)$ is a direct product of matrix algebras, hence
symmetric. Thus $(A/J(A))^\vee\cong A/J(A)$ as $A$-$A$-bimodules.
Moreover, we have an $A$-$A$-bimodule isomorphism
$A/J(A)\cong \ooplus_S\ S\tenk S^\vee$, where $S$ runs over a set
of representatives of the isomorphism classes of simple $A$-modules.
If $A$ is split and symmetric, then $A/J(A)\cong\soc(A)$ 
and $(A/\soc(A))^\vee\cong J(A)$ as $A$-$A$-bimodules. 

\begin{Lemma}[{\cite[Chapter IX, Corollary 4.4]{CE}}] 
\label{HHExtlemma1}
Let $A$ be a finite-dimensional $k$-algebra, and let $S$,
$T$ be finite-dimensional $A$-modules. There is a canonical graded 
$k$-linear isomorphism 
\[ \HH^*(A;S\tenk T^\vee)\cong\Ext_A^*(T,S). \]
\end{Lemma}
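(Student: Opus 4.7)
The plan is to reduce the statement to a standard tensor--hom adjunction, exactly as in \cite[Chapter IX, Corollary 4.4]{CE}. Recall that for any $A$-$A$-bimodule $M$ one has $\HH^*(A;M) = \Ext^*_{A^e}(A,M)$, so the task is to produce a canonical graded $k$-linear isomorphism
\[ \Ext^*_{A^e}(A, S \tenk T^\vee) \cong \Ext^*_A(T,S). \]

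The first ingredient is the canonical $A$-$A$-bimodule isomorphism $S \tenk T^\vee \cong \Hom_k(T,S)$ sending $s \otimes \varphi$ to the map $t \mapsto \varphi(t) s$, where $\Hom_k(T,S)$ carries the bimodule structure $(a \cdot f \cdot a')(t) = a\, f(a't)$. The second ingredient is the standard tensor--hom adjunction
\[ \Hom_{A^e}(M, \Hom_k(T,S)) \cong \Hom_A(M \tenA T, S), \]
natural in the $A^e$-module $M$; the case $M = A$ already yields the degree-zero statement, since $A \tenA T = T$.

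To promote this adjunction to all cohomological degrees, I would apply both sides to the bar resolution $B_* \to A$, in which $B_n = A^{\otimes (n+2)}$ is a free $A$-$A$-bimodule. The complex $\Hom_{A^e}(B_*, \Hom_k(T,S))$ then computes $\Ext^*_{A^e}(A, \Hom_k(T,S))$. On the other side, the complex $B_* \tenA T$ has terms $A^{\otimes(n+1)} \tenk T$ and is precisely the bar resolution of $T$ as a left $A$-module by free modules, so $\Hom_A(B_* \tenA T, S)$ computes $\Ext^*_A(T, S)$. The adjunction identifies these two cochain complexes termwise and naturally in $S$ and $T$.

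The only genuine verification is that this termwise identification intertwines the two bar differentials. Since both sets of differentials are built from the multiplication of $A$ and the left $A$-action on $T$, and the adjunction is implemented by the evaluation pairing, the compatibility is a direct --- if slightly fiddly --- bookkeeping calculation; this is the one expected obstacle, and it is precisely what is worked out in the Cartan--Eilenberg reference cited for the result.
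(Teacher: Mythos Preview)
Your proposal is correct and follows essentially the same route as the paper: identify $S\tenk T^\vee$ with $\Hom_k(T,S)$, apply the tensor--hom adjunction $\Hom_{A^e}(P,\Hom_k(T,S))\cong\Hom_A(P\tenA T,S)$, and feed in a projective resolution of $A$ as an $A^e$-module. The paper uses an arbitrary projective resolution and simply invokes naturality of the adjunction to pass to cochain complexes, whereas you specialise to the bar resolution; your closing worry about intertwining the differentials is exactly what that naturality handles, so no separate verification is needed.
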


\begin{proof}
A standard adjunction, with $T$ viewed as an $A$-$k$-bimodule, 
yields for any projective $A^e$-module $P$ a
natural isomorphism 
\[ \Hom_A(P\ten T, S) \cong\Hom_{A^e}(P, \Hom_k(T,S))\cong
\Hom_{A^e}(P, S\tenk T^\vee). \]  
By naturality, 
replacing $P$ by a projective resolution of $A$ as an
$A^e$-module yields an isomorphism of cochain complexes.
Taking cohomology yields the statement.
\end{proof}

\begin{Lemma} \label{HHExtlemma2}
Let $A$ be a split symmetric $k$-algebra. We have a graded
$k$-linear isomorphism 
\[ \HH^*(A;\soc(A))\cong \ooplus_S \Ext^*_A(S,S)\ , \]
where $S$ runs over a set of representatives of the isomorphism
classes of simple $A$-modules. 
In particular, we have
$$\dim_k(\Hom_{A^e}(A,\soc(A)))=\ell(A)\ ,$$
$$\dim_k(\HH^1(A;\soc(A)))=\sum_S\dim_k(\Ext^1_A(S,S))\ ,$$
where $S$ runs over a set of representatives of the isomorphism
classes of simple $A$-modules. 
\end{Lemma}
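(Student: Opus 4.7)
The plan is to reduce the statement to Lemma \ref{HHExtlemma1} by invoking the bimodule decomposition of $\soc(A)$ already noted in the paragraph preceding the lemma. For a split symmetric $k$-algebra $A$, that paragraph records an $A$-$A$-bimodule isomorphism $\soc(A)\cong A/J(A)\cong \ooplus_S S\tenk S^\vee$, where $S$ runs over a set of representatives of the isomorphism classes of simple $A$-modules. This is the main structural input; the rest is formal.

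First, I would use this bimodule isomorphism together with the fact that Hochschild cohomology is additive in the coefficient bimodule (being computed by applying $\Hom_{A^e}(P,-)$ to a projective resolution $P$ of $A$ over $A^e$, which commutes with finite direct sums in the second argument). This yields a graded $k$-linear isomorphism
\[
\HH^*(A;\soc(A))\;\cong\;\ooplus_S\ \HH^*(A;\,S\tenk S^\vee).
\]
Next, I would apply Lemma \ref{HHExtlemma1} summand by summand with $T=S$ to get $\HH^*(A;S\tenk S^\vee)\cong \Ext^*_A(S,S)$. Composing the two isomorphisms gives the claimed graded isomorphism.

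The two dimension formulas then follow by taking $*=0$ and $*=1$ in the graded isomorphism. In degree zero, $\HH^0(A;\soc(A))=\Hom_{A^e}(A,\soc(A))$ corresponds to $\ooplus_S\End_A(S)$, and since $A$ is split we have $\End_A(S)\cong k$ for each $S$, so the total dimension is $\ell(A)$. In degree one, the stated equality $\dim_k(\HH^1(A;\soc(A)))=\sum_S\dim_k(\Ext^1_A(S,S))$ is immediate.

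There is no real obstacle here beyond invoking the correct inputs: the only subtle point is justifying the $A$-$A$-bimodule isomorphism $\soc(A)\cong \ooplus_S S\tenk S^\vee$, which uses both the splitness of $A$ (so that $A/J(A)$ is a product of matrix algebras and hence bimodule-isomorphic to $\ooplus_S S\tenk S^\vee$) and the symmetry of $A$ (so that $\soc(A)\cong (A/J(A))^\vee\cong A/J(A)$ as bimodules). Both of these facts are recorded in the preceding discussion, so the proof essentially consists of assembling them with Lemma \ref{HHExtlemma1}.
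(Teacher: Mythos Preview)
Your proof is correct and follows exactly the same approach as the paper: invoke the bimodule decomposition $\soc(A)\cong\ooplus_S S\tenk S^\vee$ recorded just before the lemma, apply Lemma~\ref{HHExtlemma1} summand by summand, and read off the degree-$0$ and degree-$1$ dimensions. The paper's proof is simply a terser version of yours.
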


\begin{proof}
As mentioned above, we have $A$-$A$-bimodule isomorphisms
\[ \soc(A)\cong A/J(A)\cong  \ooplus_S\ S\tenk S^\vee, \] 
where $S$ runs over a set of representatives of the isomorphism classes of 
simple $A$-modules. Thus the isomorphism follows from the
previous lemma. Comparing dimensions in degree $0$ and in degree $1$
yields the two equalities.
\end{proof}

\section{Calculating derivations on symmetric algebras}

Let $k$ be a field and let $A$ be a finite-dimensional $k$-algebra.
We will use the description of $\HH^1(A)$ as outer derivations. 
A $k$-linear map $f \colon A\to$ $A$ is a {\it derivation} if 
$f(ab)=$ $af(b)+f(a)b$ for all $a$, $b\in$ $A$. If $z\in$ $Z(A)$
and $f$ is a derivation on $A$, then $z\cdot f$ defined by $(z\cdot f)(a)=$
$zf(a)$ is a derivation on $A$. In this way, the set of derivations
$\Der(A)$ on $A$ becomes a $Z(A)$-module. 
If $x\in$ $A$, then the map $[x,-]$
sending $a\in$ $A$ to $[x,a]=$ $xa-ax$ is a derivation; any
derivation of this form is called an {\it inner derivation},
of $A$, and the set $\IDer(A)$ of inner derivations of
$A$ is a $Z(A)$-submodule of $\Der(A)$. We have a canonical
isomorphism $\HH^1(A)\cong$ $\Der(A)/\IDer(A)$; see e.g.
\cite[9.2.1]{Weibel}. The $\HH^0(A)$-module structure and
the $Z(A)$-module structure on $\Der(A)/\IDer(A)$ correspond
to each other through the canonical isomorphism $\HH^0(A)\cong$
$Z(A)$. Any derivation $f$ on $A$ satisfies 
$f(1)=$ $0$, since $f(1)=$ $f(1\cdot 1)=$ $f(1)\cdot 1+ 1\cdot f(1)=$ 
$2 f(1)$, hence $\ker(f)$ is a unitary subalgebra of $A$. 
The space $\IDer(A)$ is isomorphic
to the quotient of $A$ by the kernel of the map $x\mapsto$ $[x,-]$,
hence $\dim_k(\IDer(A))=$ $\dim_k(A)-\dim_k(Z(A))$.
Thus if $A$ is symmetric, then  $\dim_k(\IDer(A))=$ $\dim_k([A,A])$.
For any $Z(A)$-module $H$ we denote by $\soc_{Z(A)}(H)$ its 
socle as a $Z(A)$-module.

\begin{Theorem} \label{soclederivation}
Let $A$ be a symmetric split $k$-algebra and let $E$ be a
maximal semisimple subalgebra.
Let $f \colon A\to A$ be an $E$-$E$-bimodule homomorphism satisfying 
$E+J(A)^2\subseteq$ $\ker(f)$ and $\Im(f)\subseteq$ $\soc(A)$. 
Then $f$ is a derivation on $A$ in $\soc_{Z(A)}(\Der(A))$, and if 
$f\neq$ $0$, then $f$ is an outer derivation of $A$.
In particular, we have
$$\sum_S\ \dim_k(\Ext^1_A(S,S)) \leq \dim_k(\soc_{Z(A)}(\HH^1(A)))$$
where in the sum $S$ runs over a set of representatives of
the isomorphism classes of simple $A$-modules.
\end{Theorem}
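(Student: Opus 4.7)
The plan is to verify the three assertions on $f$ in turn and then deduce the inequality by identifying the space of such $f$ with $\HH^1(A;\soc(A))$. Fix a Wedderburn--Malcev decomposition $A=E\oplus J(A)$ and write each $a\in A$ as $a=e_a+r_a$ with $e_a\in E$, $r_a\in J(A)$; also fix a symmetrising form $s$ on $A$. The Leibniz rule for $f$ will follow from a direct expansion: $f(ab)=f(e_ae_b)+f(e_ar_b)+f(r_ae_b)+f(r_ar_b)$, where the first and last terms vanish by $E+J(A)^2\subseteq\ker(f)$ and $E$-$E$-bilinearity reduces the middle two to $e_af(r_b)+f(r_a)e_b$; on the other side $af(b)+f(a)b=e_af(r_b)+f(r_a)e_b$ once the cross terms $r_af(r_b)$ and $f(r_a)r_b$ are killed by $J(A)\cdot\soc(A)=\soc(A)\cdot J(A)=0$. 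For the socle property, any $z\in J(Z(A))\subseteq J(A)$ satisfies $(z\cdot f)(a)=zf(a)\in J(A)\cdot\soc(A)=0$, so the class $[f]$ is annihilated by $J(Z(A))$ and hence lies in $\soc_{Z(A)}(\HH^1(A))$.

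The outer assertion is the crux. Suppose $f=[x,-]$, so $[x,A]\subseteq\soc(A)$. The cyclicity of $s$ yields $s([x,a]b)=s(x[a,b])$; combining this with $\soc(A)^\perp=J(A)$ shows that $[x,A]\subseteq\soc(A)$ is equivalent to $x\in[A,J(A)]^\perp$. The key step is the identity $[A,J(A)]=[A,A]\cap J(A)$: if $[a,b]\in J(A)$ then the $E$-component $[e_a,e_b]$ in the Wedderburn--Malcev expansion $[a,b]=[e_a,e_b]+[e_a,r_b]+[r_a,e_b]+[r_a,r_b]$ is forced to vanish, and the remaining summands manifestly lie in $[A,J(A)]$. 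Taking perpendiculars and using $[A,A]^\perp=Z(A)$ together with $J(A)^\perp=\soc(A)$ then yields $x\in Z(A)+\soc(A)$, so $f=[\sigma,-]$ for some $\sigma\in\soc(A)$. The hypothesis $f|_E=0$ forces $[\sigma,E]=0$, while $[\sigma,J(A)]=0$ is automatic since $\sigma\in\soc(A)$; hence $\sigma\in Z(A)$ and $f=0$.

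For the inequality, the space $V$ of $f$ satisfying the hypotheses is naturally identified with $\Hom_{E^e}(J(A)/J(A)^2,\soc(A))$, and the three assertions give an injective $k$-linear map $V\hookrightarrow\soc_{Z(A)}(\HH^1(A))$, $f\mapsto[f]$. The natural map $V\to\HH^1(A;\soc(A))$ is also an isomorphism: surjectivity uses separability of the split semisimple algebra $E$ to adjust any derivation $g\colon A\to\soc(A)$ by an inner derivation (with parameter in $\soc(A)$) so that it vanishes on $E$, hence becomes $E$-$E$-bilinear and, by Leibniz together with $\soc(A)\cdot J(A)=0$, vanishes on $J(A)^2$; injectivity repeats the outer-derivation argument above. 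By Lemma~\ref{HHExtlemma2}, $\dim_k V=\dim_k\HH^1(A;\soc(A))=\sum_S\dim_k\Ext^1_A(S,S)$, and the inequality follows. The main obstacle is the perpendicularity identity $\{x\in A:[x,A]\subseteq\soc(A)\}=Z(A)+\soc(A)$ underpinning the outer-derivation argument, which in turn rests on the slightly non-obvious equality $[A,J(A)]=[A,A]\cap J(A)$.
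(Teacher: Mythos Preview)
Your argument is correct, and the derivation check and the socle assertion match the paper exactly. The two places where you diverge from the paper are the outer-derivation step and the final dimension count.

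For the outer-derivation step, the paper argues more directly: if $f$ is inner then $\Im(f)\subseteq [A,A]\cap\soc(A)$, but $\Im(f)$ is an $E$-$E$-submodule of $\soc(A)$, hence an $A$-$A$-submodule (since $J(A)$ kills $\soc(A)$), hence an ideal of $A$; and a symmetric algebra has no nonzero ideal inside $[A,A]$ because $[A,A]\subseteq\ker(s)$. Your route via $[A,J(A)]^\perp=Z(A)+\soc(A)$ is a nice alternative and gives slightly finer information (it locates $x$ rather than just $\Im(f)$). One small wording issue: your sentence ``if $[a,b]\in J(A)$ then \dots'' treats a single commutator, whereas elements of $[A,A]\cap J(A)$ are sums. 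The fix is implicit in what you wrote: the expansion shows $[A,A]=[E,E]+[A,J(A)]$ with $[E,E]\subseteq E$ and $[A,J(A)]\subseteq J(A)$, so projecting to $E$ along $J(A)$ kills anything in $[A,A]\cap J(A)$, giving the inclusion you need.

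For the inequality, the paper bypasses $\HH^1(A;\soc(A))$ entirely: it observes $\Hom_{E^e}(J(A)/J(A)^2,\soc(A))=\Hom_{A^e}(J(A)/J(A)^2,\soc(A))$ and computes this dimension by hand via $\soc(A)\cong\bigoplus_S S\otimes_k S^\vee$. Your identification of $V$ with $\HH^1(A;\soc(A))$ followed by Lemma~\ref{HHExtlemma2} is essentially the alternative proof the paper sketches in the Remark after Proposition~\ref{AsocA}; it is a little less elementary (it uses separability of $E$) but makes the link to Hochschild cohomology with coefficients explicit.
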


\begin{proof}
Let $a$, $b\in$ $A$. By the Wedderburn--Malcev theorem, we have 
$A=$ $E\oplus J(A)$. Thus $a=$ $c+r$ and $b=$ $d+s$ for some
$c$, $d\in$ $E$ and $r$, $s\in$ $J(A)$. The hypotheses on
$f$ imply that 
\[ f(ab)=f(cd+cs+rd+rs)=f(cs+rd)=cf(s)+f(r)d=af(b)+f(a)b. \] 
This shows that $f$ is a
derivation. Suppose that $f$ is an inner derivation. Then
$\Im(f)\subseteq$ $[A,A]\cap \soc(A)$. But $\Im(f)$ is also 
an $E$-$E$-bimodule. Any $E$-$E$-bimodule contained in 
$\soc(A)$ is in fact an ideal. The space $[A,A]$ contains 
no nonzero ideal as $A$ is symmetric. Thus $f$ is either 
zero or an outer derivation. Since $J(Z(A))$ is contained
in $J(A)$, any such derivation is annihilated by $J(Z(A))$.
This shows that
$\Hom_{E^e}(J(A)/J(A)^2,\soc(A))$ is isomorphic
to a subspace of $\soc_{Z(A)}(\HH^1(A))$. Since $J(A)$ 
annihilates $J(A)/J(A)^2$ and $\soc(A)$, this subspace is 
isomorphic to $\Hom_{A^e}(J(A)/J(A)^2,\soc(A))$.
As $A$ is symmetric, we have 
\[ \soc(A)\cong A/J(A)\cong\ooplus_S\ S\tenk S^\vee, \] 
with $S$ running over a set of 
representatives of the isomorphism classes of simple 
$A$-modules. The dimension of 
$\Hom_{A^e}(J(A)/J(A)^2,S\tenk S^\vee)$ is equal to
the number of summands of the $A^e$-module $J(A)/J(A)^2$ 
isomorphic to $S\tenk S^\vee$. If $i$ is a primitive 
idempotent such that $iS\neq$ $\{0\}$, then $S$ is the 
unique simple quotient of $Ai$, hence $S^\vee$ is the 
unique simple quotient of $iA$, and thus $S^\vee i$ is 
one-dimensional. It then follows that the dimension of 
$\Hom_{A^e}(J(A)/J(A)^2,S\tenk S^\vee)$ is equal to
the number of summands of $J(A)i/J(A)^2i$ isomorphic to $S$,
and that is precisely $\dim_k(\Ext^1_A(S,S))$.
\end{proof}

\begin{Corollary} \label{soclederivationlocal}
Let $A$ be a split local symmetric $k$-algebra. 
Let $f \colon A\to A$ be a $k$-linear map satisfying 
$1+J(A)^2\subseteq$ $\ker(f)$ and $\Im(f)\subseteq$ $\soc(A)$. 
Then $f$ is a derivation on $A$ in $\soc_{Z(A)}(\Der(A))$, and if 
$f\neq$ $0$, then $f$ is an outer derivation of $A$.
In particular, we have
$$\dim_k(J(A)/J(A)^2) \leq \dim_k(\soc_{Z(A)}(\HH^1(A)))\ .$$
\end{Corollary}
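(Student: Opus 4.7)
The plan is to derive this corollary as the local special case of Theorem~\ref{soclederivation}. Since $A$ is split local, the Wedderburn--Malcev theorem identifies the unique maximal semisimple subalgebra as $E = k\cdot 1_A$, and under this identification any $k$-linear map $f\colon A\to A$ is automatically an $E$-$E$-bimodule homomorphism, so that part of the hypothesis of Theorem~\ref{soclederivation} becomes vacuous. The hypothesis $1+J(A)^2\subseteq\ker(f)$ is then equivalent, via $k$-linearity, to the hypothesis $E+J(A)^2\subseteq\ker(f)$ required by Theorem~\ref{soclederivation}: taking $j=0$ gives $f(1)=0$, whence $f(j)=f(1+j)-f(1)=0$ for every $j\in J(A)^2$, and linearity then gives $f(\lambda\cdot 1+j)=0$ for all $\lambda\in k$ and $j\in J(A)^2$.

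Theorem~\ref{soclederivation} now applies directly and yields that $f$ is a derivation in $\soc_{Z(A)}(\Der(A))$, and an outer derivation whenever it is nonzero. For the dimension inequality, I would observe that since $A$ is split local there is a unique isomorphism class of simple $A$-modules, represented by the trivial module $S=k$, so the sum in Theorem~\ref{soclederivation} collapses to the single term $\dim_k(\Ext^1_A(k,k))$. Using a minimal projective resolution $\cdots\to P_1\to P_0=A\to k\to 0$, the module $P_1$ is a projective cover of $J(A)$, hence $P_1\cong A^n$ with $n=\dim_k(J(A)/J(A)^2)$; minimality forces every differential in $\Hom_A(P_\bullet,k)$ to vanish, so $\dim_k(\Ext^1_A(k,k))=\dim_k(J(A)/J(A)^2)$. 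Substituting this into the inequality of Theorem~\ref{soclederivation} gives the stated bound.

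I do not anticipate any substantial obstacle: the content is entirely contained in Theorem~\ref{soclederivation}, and the corollary is a clean specialization under the local hypothesis. The only point that requires a moment of care is the translation of the coset inclusion $1+J(A)^2\subseteq\ker(f)$ into the subspace inclusion $k\cdot 1+J(A)^2\subseteq\ker(f)$ needed to invoke the general theorem, and this translation is immediate from $k$-linearity of $f$.
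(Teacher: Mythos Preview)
Your proposal is correct and follows essentially the same approach as the paper: specialize Theorem~\ref{soclederivation} to the case $E=k\cdot 1_A$, and then use that for a split local algebra the sum over simples collapses to $\dim_k(\Ext^1_A(S,S))=\dim_k(J(A)/J(A)^2)$. The paper's proof is terser but identical in content; your extra care with the translation of $1+J(A)^2\subseteq\ker(f)$ into a subspace inclusion and with the minimal resolution argument only makes the same reasoning more explicit.
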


\begin{proof}
Since $A$ is split local, we have $\dim_k(J(A)/J(A)^2)=$
$\dim_k(\Ext_A^1(S,S))$, where $S=$ $A/J(A)$ is the unique
simple $A$-module, up to isomorphism. Moreover, $k\cdot 1_k$ is
the unique maximal semisimple subalgebra of $A$. The result
follows from Theorem \ref{soclederivation}.
\end{proof}

Combining Theorem \ref{soclederivation} and 
Corollary \ref{soclederivationlocal}
implies Theorem \ref{soclederivationI}.

\begin{Remark} \label{soclederivationRemark}
Note that $\HH^1(A)$ is annihilated by the projective ideal
$Z^{pr}(A)$ in $Z(A)$, hence $\HH^1(A)$ is a module over the 
stable center $\bar Z(A)=$ $Z(A)/Z^{pr}(A)$, and 
\[ \soc_{Z(A)}(\HH^1(A))=\soc_{\bar Z(A)}(\HH^1(A)). \] 
This shows that $\soc_{Z(A)}(\HH^1(A))$ is invariant under stable 
equivalences of Morita type.
\end{Remark}

It is possible to give a more structural proof of the inequality in
Theorem \ref{soclederivation}, based on the following result.

\begin{Proposition} \label{AsocA}
Let $A$ be a split symmetric $k$-algebra. We have canonical short 
exact sequences 
\begin{gather*}
\xymatrix{0\ar[r]&\Hom_{A^e}(A,\soc(A))\ar[r] &
\Hom_{A^e}(A,A)\ar[r] & \Hom_{A^e}(A,A/\soc(A))
\ar[r] & 0} \\
\xymatrix{0\ar[r]&\Hom_{A^e}(A/J(A),A)\ar[r] &
\Hom_{A^e}(A,A)\ar[r] & \Hom_{A^e}(J(A),A)
\ar[r] & 0}
\end{gather*}
In particular, we have
$$\dim_k(Z(A)) - \ell(A) = \dim_k(\Hom_{A^e}(A,A/\soc(A)))
=\dim_k(\Hom_{A^e}(J(A),A))\ .$$
\end{Proposition}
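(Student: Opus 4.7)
The plan is to reduce both short exact sequences to a single commutator identity, and to exploit the self-duality $A\cong A^\vee$ of bimodules afforded by a symmetrising form. For any $A$-$A$-bimodule $M$ one has the standard identifications $\Hom_{A^e}(A,M)\cong Z(M):=\{m\in M : am=ma\text{ for all }a\in A\}$ and, via tensor-hom adjunction together with $A\cong A^\vee$,
\[ \Hom_{A^e}(M,A)\cong (M\otimes_{A^e}A)^\vee\cong (M/[A,M])^\vee. \]
Moreover, dualising the bimodule sequence $0\to J(A)\to A\to A/J(A)\to 0$ and using $J(A)^\perp=\soc(A)$ identifies it with $0\to\soc(A)\to A\to A/\soc(A)\to 0$. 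Combining these two observations shows that applying $\Hom_{A^e}(A,-)$ to the latter sequence is canonically isomorphic to applying $\Hom_{A^e}(-,A)$ to the former. Hence the two short exact sequences in the proposition are canonically isomorphic, and it suffices to establish exactness of the second one.

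Apply $\Hom_{A^e}(-,A)$ to $0\to J(A)\to A\to A/J(A)\to 0$: left exactness is automatic, and under the displayed duality the induced Hom-sequence is the $k$-dual of the sequence
\[ J(A)/[A,J(A)]\;\xrightarrow{\alpha}\;A/[A,A]\;\to\;(A/J(A))/[A/J(A),A/J(A)]\;\to\;0 \]
obtained by applying the right-exact coinvariants functor $M\mapsto M/[A,M]$. The $k$-dual of a right-exact sequence is left-exact, and yields a full short exact sequence precisely when $\alpha$ is injective. Thus the surjectivity on the right in the second claimed Hom-sequence reduces to the commutator identity
\[ J(A)\cap[A,A]\;=\;[A,J(A)]. \]

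This identity is the main obstacle. The inclusion $\supseteq$ is immediate, since $J(A)$ is an ideal, so $aj-ja\in J(A)$ for all $a\in A$ and $j\in J(A)$. For the reverse inclusion I invoke the Wedderburn--Malcev decomposition $A=E\oplus J(A)$ with $E$ a split semisimple subalgebra (using that $A$ is split). Expanding commutators according to this decomposition yields $[A,A]=[E,E]+[A,J(A)]$; since $[E,E]\subseteq E$ while $[A,J(A)]\subseteq J(A)$ and $E\cap J(A)=\{0\}$, any element of $[A,A]$ that lies in $J(A)$ must have vanishing $E$-component and therefore lie in $[A,J(A)]$.

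With both exact sequences established, the dimension formulae follow by additivity. One has $\Hom_{A^e}(A,A)\cong Z(A)$, and $\dim_k\Hom_{A^e}(A,\soc(A))=\ell(A)$ by Lemma~\ref{HHExtlemma2}. Dually, $\Hom_{A^e}(A/J(A),A)\cong((A/J(A))/[A/J(A),A/J(A)])^\vee$ has dimension $\ell(A)$, since each matrix block $M_n(k)$ of the split semisimple quotient $A/J(A)$ contributes a one-dimensional commutator quotient via the trace. Substituting into the two sequences gives the asserted chain of equalities $\dim_k Z(A)-\ell(A)=\dim_k\Hom_{A^e}(A,A/\soc(A))=\dim_k\Hom_{A^e}(J(A),A)$.
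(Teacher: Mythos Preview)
Your proof is correct, and it follows a genuinely different route from the paper's.

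The paper first reduces to the basic case and then proves exactness of the \emph{first} sequence directly: it shows that any $A^e$-homomorphism $A\to A/\soc(A)$ lifts to one $A\to A$ by writing such a map as right multiplication by some $y$ and then arguing, via primitive idempotents and the fact that $\soc(iAi)\cap[iAi,iAi]=0$ for the local symmetric algebra $iAi$, that the condition $[y,a]\in\soc(A)$ for all $a$ forces $y\in Z(A)$. The second sequence is then deduced from the first by duality.

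You instead set up the duality first, reducing the first sequence to the second, and then reduce exactness of the second sequence to the commutator identity $J(A)\cap[A,A]=[A,J(A)]$, which you prove cleanly via the Wedderburn--Malcev decomposition $A=E\oplus J(A)$ (already used elsewhere in the paper). This avoids the reduction to the basic case and the idempotent manipulations entirely, and packages the heart of the matter into a single transparent equality. The paper's approach, by contrast, is more hands-on and makes the obstruction to lifting visible at the level of elements. One minor quibble: calling the identification of the two Hom-sequences ``canonical'' is a slight overstatement, since it depends on the choice of symmetrising form; but this does not affect the validity of the argument, as each sequence individually is canonical and you only use the identification to transfer exactness.
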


\begin{proof}
We may assume that $A$ is basic. Any $A^e$-homomorphism from $A$ to 
$A/\soc(A)$ is in particular a homomorphism of left $A$-modules. As 
such, it lifts to an endomorphism of $A$, and hence is induced by 
right multiplication with an element $y\in$ $A$, followed by the 
canonical map $A\to$ $A/\soc(A)$. For this to induce a bimodule 
homomorphism from $A$ to $A/\soc(A)$ a necessary condition is 
$[y,a]\in$ $\soc(A)$ for all $a\in$ $A$. Using that $A$ is basic, one 
can show that this forces $y\in$ $Z(A)$. Indeed, for
any primitive idempotent $i$ we have 
\[ [y,i]=yi-iy=yi-iyi-iy(1-i)\in\soc(A). \] Since $A$ is basic, this
forces $iy(1-i)=$ $0$ because $\soc(A(1-i))$ has no
submodule isomorphic to the simple module $Ai/J(A)i$. Thus $iy=$ 
$iyi$, and a similar argument shows $iyi=$ $yi$. Thus $y$ commutes
with all primitive idempotents. For $a\in$ $Ai$ we have
$[y,a]=$ $yai-ayi\in$ $\soc(Ai)$, so this is annihilated
by $1-i$, hence equal to $iyiai-iaiyi\in$ $\soc(iAi)\cap [iAi,iAi]$,
which is zero because the local algebra $iAi$ is symmetric.
Thus $y\in$ $Z(A)$, which means precisely that the induced
homomorphism $A\to$ $A/\soc(A)$ lifts to a bimodule 
homomorphism $A\to$ $A$, whence the exactness of the first sequence
as stated. The second sequence is obtained from applying duality to 
the first.
By Lemma \ref{HHExtlemma2}, the dimension of the left term
in the first sequence is $\ell(A)$, and the middle term is isomorphic 
to $Z(A)$, which proves the first equality. The second equality is
obtained via duality. 
\end{proof} 

\begin{Remark}
The inequality in Theorem \ref{soclederivation} can be proved using
Proposition \ref{AsocA} as follows.
We consider the long exact sequence obtained from applying the
functor $\Hom_{A^e}(A,-)$ to the short
exact sequence of $A^e$-modules
$$\xymatrix{0\ar[r] &  \soc(A)\ar[r] &  A\ar[r] &
A/\soc(A)\ar[r] & 0}$$
This yields in particular an exact sequence
$$\xymatrix{\Hom_{A^e}(A,A) \ar[r] &
\Hom_{A^e}(A, A/\soc(A)) \ar[r] &
HH^1(A; \soc(A))\ar[r] & HH^1(A) }$$
By Proposition \ref{AsocA} the first map is surjective.
Thus the second map is zero, hence the third map is
injective.
Thus $HH^1(A;\soc(A))$ is isomorphic to a subspace
of $HH^1(A)$. Since $J(Z(A))\subseteq$ $J(A)$, this subspace
is contained in $\soc_{Z(A)}(HH^1(A))$. The inequality in
Theorem \ref{soclederivation} follows from \ref{HHExtlemma2}.

The surjectivity of the first map in the above exact sequence
can be used to give a proof of a result of Brandt \cite{Brandt}, as follows.
Identify $\Hom_{A^e}(J(A)/J(A)^2;A)$ with
a subspace of $\Hom_{A^e}(J(A);A)$ via the canonical surjection
$J(A)\to$ $J(A)/J(A)^2$. If $J(A)^2$ is nonzero, then
$\Hom_A(J(A)/J(A)^2,A)$ is strictly smaller than $\Hom_A(J(A),A)$,
because the inclusion map $J(A)\subseteq$ $A$ does not factor through
$J(A)/J(A)^2$. Since $J(A)/J(A)^2$ is semisimple, we have
\[ \Hom_{A^e}(J(A)/J(A)^2,A)=\Hom_{A^e}(J(A)/J(A)^2,\soc(A)), \] 
which in turn (as
observed in the proof of \ref{soclederivation}) is isomorphic to
$\ooplus_S \Ext^1_A(S,S)$, where $S$ runs over a set of representatives
of the isomorphism classes of simple $A$-modules. Thus, if $A$ is split
symmetric such that $J(A)^2\neq$ $\{0\}$, then the dimension of
$\Hom_{A^e}(J(A),A)$ is strictly greater than that of
$\ooplus_S \Ext^1_A(S,S)$. Proposition \ref{AsocA} implies in that
case the inequality
$$\dim_k(Z(A))-\ell(A) \geq 1 + \sum_S \dim_k(\Ext^1_A(S,S))$$
due to Brandt \cite[Theorem B]{Brandt}.

It follows that the integers $\dim_k(Z(A))-\ell(A)-1$ and
$\dim_k(\soc_{Z(A)}(HH^1(A)))$ are both upper bounds for
$\sum_S \dim_k(\Ext^1_A(S,S))$. These two upper bounds are not
comparable in general, since they arise from unrelated parts
of a long exact sequence with a zero map. The following two examples
illustrate this. Suppose that $k$ has odd prime characteristic $p$.
If $A=$ $k(C_p\rtimes C_{p-1})$, with $C_{p-1}$ acting regularly
on the nontrivial elements of $C_p$, then standard
calculations yield
$$\dim_k(Z(A))-\ell(A)-1= 0< \dim_k(\soc_{Z(A)}(HH^1(A)))=1\ .$$
By contrast, if $A$ is as in Theorem \ref{HHoneLiestructure}, then,
using Lemma \ref{centralbasis} below, we have
$$\dim_k(Z(A))-\ell(A)-1 = \textstyle\left(\frac{p-1}{e}\right)^2+2p-3
\geq \dim_k(\soc_{Z(A)}(HH^1(A)))= 2e\ ,$$
with equality if and only if $e=p-1$.
\end{Remark}

Derivations with image in the second socle layer are characterised 
as follows.

\begin{Proposition} \label{secondsoclederivation}
Let $A$ be a split local symmetric $k$-algebra, let $\{x_1,x_2,..,x_r\}$
be a $k$-basis of a complement of $J(A)^2$ in $J(A)$, and let
$z$ be a nonzero element in $\soc(A)$. There is a basis
$\{y_1,y_2,..,y_r\}$ of a complement of $\soc(A)$ in $\soc^2(A)$
such that $x_iy_i=$ $y_ix_i=$ $z$ for $1\leq$ $i\leq$ $r$, and
such that $x_iy_j=$ $y_jx_i=$ $0$, for $1\leq$ $i,j\leq$ $r$, $i\neq$ 
$j$. Let $f \colon A\to A$ be a $k$-linear map satisfying
$1+J(A)^2\subseteq$ $\ker(f)$, such that 
$$f(x_i)= \sum_{j=1}^r\ \sigma_{i,j} y_j$$
for some coefficients $\sigma_{i,j}\in$ $k$, $1\leq$ $i,j\leq$ $r$.
\begin{enumerate}
\item[\rm (i)]
The map $f$ is a derivation if and only if $\sigma_{i,j}=$
$-\sigma_{j,i}$ for all $i$, $j$, $1\leq$ $i,j\leq$ $r$.
In particular, if $\chr(k)\neq$ $2$ and if $f$ is a
derivation, then $\sigma_{i,i}=$ $0$ for $1\leq$ $i\leq$ $r$, and
the space of derivations obtained in this
way has dimension $\frac{r(r-1)}{2}$.
\item[\rm (ii)]
If $f$ is an inner derivation, then $\Im(f)\subseteq$ 
$\soc(Z(A))\cap \soc^2(A)\cap [A,A]$, and $\Im(f)$
is contained in a complement
of $\soc(A)$ in $\soc(Z(A))\cap\soc^2(A)$.
\end{enumerate}
\end{Proposition}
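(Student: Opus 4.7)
The first step is to construct the basis $\{y_1,\dots,y_r\}$ with the required multiplication table. I would use the symmetrizing form $s$: since $\soc(A)=J(A)^\perp$ and (by a dimension count together with the inclusion $\soc^2(A)\subseteq(J(A)^2)^\perp$) also $\soc^2(A)=(J(A)^2)^\perp$, the bilinear form $(a,b)\mapsto s(ab)$ descends to a nondegenerate pairing
$$J(A)/J(A)^2\ \times\ \soc^2(A)/\soc(A)\ \longrightarrow\ k.$$
Choosing representatives $\tilde y_j\in\soc^2(A)$ dual to $\{x_i\}$ under this pairing, the products $x_i\tilde y_j$ automatically lie in $\soc(A)=kz$, and a uniform rescaling of the $\tilde y_j$ (absorbing the nonzero scalar $s(z)$) produces $y_j$ with $x_iy_j=\delta_{ij}z$. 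The symmetric identity $y_jx_i=\delta_{ij}z$ then follows from $s(x_iy_j)=s(y_jx_i)$ together with the a priori knowledge that $y_jx_i\in\soc(A)=kz$.

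For (i), I would verify the derivation condition $f(ab)=af(b)+f(a)b$ on the decomposition $A=k\cdot 1\oplus\bigoplus_i kx_i\oplus J(A)^2$. Since $f$ vanishes on $k\cdot 1$ and on $J(A)^2$, only the case $a,b\in J(A)$ is nontrivial; the annihilations $J(A)^2\cdot\soc^2(A)=0=\soc^2(A)\cdot J(A)^2$ (immediate from $J(A)\soc(A)=0$) let the $J(A)^2$-components drop out of both sides, reducing the problem to pairs $a=x_i$, $b=x_j$. Here $f(x_ix_j)=0$ while $x_if(x_j)+f(x_i)x_j=(\sigma_{j,i}+\sigma_{i,j})z$ via the multiplication table of the $y_\ell$'s, so $f$ is a derivation iff $\sigma_{i,j}=-\sigma_{j,i}$; in odd characteristic the diagonal then vanishes and the space of such $f$ has dimension $\binom{r}{2}$.

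For (ii), I would first establish the auxiliary structural fact $\soc^2(A)\subseteq Z(A)$ in any split local symmetric algebra: for $c\in\soc^2(A)$ and $a\in J(A)$, both $ac$ and $ca$ lie in $\soc(A)$, hence $[a,c]\in\soc(A)\cap[A,A]=\{0\}$ by the relation $[A,A]\cap\soc(A)=\{0\}$ recorded just before Lemma~\ref{soctwocentral}. Since $f$ is inner, $\Im(f)\subseteq[A,A]$, and by construction $\Im(f)\subseteq\soc^2(A)$, giving $\Im(f)\subseteq Z(A)\cap\soc^2(A)\cap[A,A]$. For the $J(Z(A))$-annihilation, write $f=[u,-]$; for any $\zeta\in J(Z(A))$ the centrality of $\zeta$ yields
$$\zeta\cdot f(a)=\zeta[u,a]=[u,\zeta a]\in[A,A]\cap\soc(A)=\{0\},$$
so $\Im(f)\subseteq\soc(Z(A))$. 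Since $\Im(f)\subseteq\mathrm{span}(y_1,\dots,y_r)$ meets $\soc(A)$ trivially, it lies in some complement of $\soc(A)$ in $\soc(Z(A))\cap\soc^2(A)$.

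The main obstacle I anticipate is the opening construction of the basis $\{y_j\}$: producing one basis that simultaneously satisfies both left- and right-handed identities $x_iy_j=\delta_{ij}z=y_jx_i$ requires combining the nondegeneracy of the pairing, the symmetry $s(ab)=s(ba)$, and the one-dimensionality of $\soc(A)$. Once this bilateral normalization is in place the rest is short, since the recurring structural input $[A,A]\cap\soc(A)=\{0\}$ drives both the centrality of $\soc^2(A)$ in part (ii) and the final annihilation of $\Im(f)$ by $J(Z(A))$.
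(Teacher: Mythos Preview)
Your proposal is correct and follows essentially the same route as the paper. Two small differences worth noting: you actually supply the construction of the dual basis $\{y_j\}$ via the symmetrising form (the paper states its existence but does not prove it), and your $J(Z(A))$-annihilation step in (ii) argues via $\zeta f(a)\in\soc(A)\cap[A,A]=\{0\}$, whereas the paper instead observes that $\zeta x_i\in J(A)^2$ lies in $\ker f$, so $[w,x_i]\zeta=[w,\zeta x_i]=0$ directly; both arguments rest on the same structural input $[A,A]\cap\soc(A)=\{0\}$.
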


\begin{proof}
Let $a$, $b\in$ $A$. In the following sums, the indices
$i$ and $j$ run from $1$ to $r$. Write 
\[ a=\sum_i\ \alpha_ix_i + \lambda\cdot 1 + u \]
with coefficients $\alpha_i$ and $\lambda$ in $k$, and $u\in$ $J(A)^2$.
Similarly, write
\[ b=\sum_j\ \beta_ix_i + \mu\cdot 1 + v \]
with $\beta_i,\mu\in k$ and $v\in J(A)^2$.
Thus 
\begin{align*} 
f(a)&= \sum_i\ \alpha_ix_i=
\sum_{i,j}\ \alpha_i\sigma_{i,j}y_j,  \\
f(b)&= \sum_i\ \beta_ix_i=
\sum_{i,j}\ \beta_i\sigma_{i,j}y_j. 
\end{align*}
Since $u$, $v$ annihilate the $y_i$, short calculations,
using the hypotheses on $f$, yield
\begin{align*}
f(a)b&= (\sum_{i,j}\alpha_i\sigma_{i,j}\beta_j)z + \mu f(a), \\
af(b)&=(\sum_{i,j}\beta_i\sigma_{i,j}\alpha_j)z + \lambda f(b), \\
f(ab)&=\mu f(a) + \lambda f(b).
\end{align*}
Thus $f$ is a derivation if and only if 
\[ \sum_{i,j} (\alpha_i\sigma_{i,j}\beta_j +  \beta_i\sigma_{i,j} \alpha_j)=0 \]
for all choices of coefficients $\alpha_i$, $\beta_j$.
This holds if and only if $\sigma_{i,j}=-\sigma_{j,i}$ for
all $i$, $j$. Statement (i) follows.
Suppose now that $f$ is an inner derivation,
say $f= [w,-]$ for some $w\in A$. By the assumptions on
$f$, we have $[w,J(A)^2]= \{0\}$ and $[w,A]\subseteq 
\soc^2(A)\subseteq Z(A)$, where the last inclusion is
from Lemma \ref{soctwocentral}. Note that $\Im(f)$ is spanned
by the $[w,x_i]$, $1\leq i\leq r$. If $c\in$ $J(Z(A))$, then
\[ [w,x_i]c= wx_ic-x_iwc= w(x_ic)-(x_ic)w= 0, \] 
since $x_ic$ is contained in $J(A)^2$, hence commutes with $w$.
Thus $\Im(f)$ is annihilated by $J(Z(A))$, implying 
$\Im(f)\subseteq \soc(Z(A))\cap \soc^2(A)$. Since
also $\Im(f)\subseteq [A,A]$, which intersects $\soc(A)$
trivially as $A$ is symmetric split local, statement
(ii) follows. 
\end{proof}

%

For monomial algebras, the Lie algebra structure of $\HH^1$ has been
calculated in work of Strametz \cite{Strametz}.
Maximal diagonalisable Lie subalgebras of $\HH^1$ have been calculated
by Le Meur \cite{LeMeur} for certain algebras without oriented cycles.
The dimension of $\dim(\HH^1(A))$ is related to combinatorial data of
the quiver of $A$ in work of de la Pe\~na and Saor\'{\i}n \cite{delaPSa}.

\section{The dimension of $\HH^1(A)$}

Let $k$ be a field of odd prime characteristic $p$,  let 
$1\ne q \in k^{\times}$ have order $e$ dividing $p-1$, and let
\[ A = k \langle x,y\ |\ x^p=y^p=0,\ yx=qxy\rangle. \]
Then $A$ is a symmetric local $k$-algebra of dimension $p^2$,
having the set of monomials
$$V = \{x^iy^j\ |\ 0\leq i, j\leq p-1 \}$$ 
as a $k$-basis. The linear map $A\to$ $k$ sending $x^{p-1}y^{p-1}$ to
$1$ and all other monomials in $V$ to $0$ is a symmetrising form
for $A$. 

\begin{Remark}
One can define an algebra $A$ as above for arbitrary $q\in$ $k^\times$, 
but unless the order of $q$ divides $p-1$, this yields a selfinjective 
algebra which is not symmetric. Indeed, if $A$ is symmetric, then any 
symmetrising form $s$ of $A$ is nonzero on the socle element 
$x^{p-1}y^{p-1}$.
Thus 
\[ 0\neq s(x^{p-1}y^{p-1})= s(x^{p-2}y^{p-1}x)=
q^{p-1}s(x^{p-1}y^{p-1}), \] 
and hence $q^{p-1}= 1$. Thus the algebras
arising for $q$ not of order dividing $p-1$ are not Morita equivalent
to block algebras of finite groups.
\end{Remark}

The purpose of this section is to determine the dimension of $\HH^1(A)$.

\begin{Proposition} \label{HHonedim}
We have $\dim(\HH^1(A))= 2(p +  (\frac{p-1}{e}  )^2)$.
\end{Proposition}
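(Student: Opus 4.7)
The plan is to compute $\dim_k \HH^1(A)$ through the description
$\HH^1(A)\cong \Der(A)/\IDer(A)$, by exploiting the fact that $A$
carries a $\Z\times\Z$-grading in which $\deg(x)=(1,0)$ and
$\deg(y)=(0,1)$. The three defining relations $x^p$, $y^p$, and
$yx-qxy$ are all bi-homogeneous for this grading, so it descends
to $A$, to $\Der(A)$, and to $\IDer(A)$. Hence it is enough to
count bi-homogeneous derivations degree by degree and then compute
$\dim_k Z(A)$, after which $\dim_k\IDer(A)=p^2-\dim_kZ(A)$.

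First I would classify the bi-homogeneous derivations. A derivation
$f$ of bi-degree $(a,b)$ is determined by its values on the
generators, and these must take the form $f(x)=c\cdot x^{a+1}y^b$
and $f(y)=d\cdot x^ay^{b+1}$ for scalars $c,d\in k$, with the
convention that $c$ or $d$ is forced to zero whenever the
corresponding monomial falls outside the basis $V$. The relevant
bi-degrees thus range over $\{-1,0,\dots,p-1\}^2\setminus
\{(-1,-1),(p-1,p-1)\}$. Imposing the three derivation compatibility
conditions and using $y^jx^i=q^{ij}x^iy^j$ reduces $f(yx)=qf(xy)$,
in the interior range $0\le a,b\le p-2$, to the single linear
equation $c(q^a-1)+d(q^b-1)=0$, while $f(x^p)=0$ and $f(y^p)=0$
are then automatic. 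On the boundaries ($a=-1$, $b=-1$, $a=p-1$, or
$b=p-1$) one of $c,d$ is absent, the $q$-commutator is either
automatic or forces the surviving coefficient to zero, and the
constraint $f(x^p)=0$ (respectively $f(y^p)=0$) reduces to
multiplication by $\sum_{k=0}^{p-1}q^{jk}$, which in characteristic
$p$ equals $0$ when $e\mid j$ (the sum collapses to $p$) and
equals $1$ otherwise (since $q^{p-1}=1$ forces $q^{jp}=q^j$).
Tallying the resulting $(c,d)$-spaces across all bi-degrees
gives
$$\dim_k\Der(A)=p^2+1+\Bigl(\tfrac{p-1}{e}\Bigr)^2,$$
the quadratic term arising from the interior bi-degrees with
$e\mid a$ and $e\mid b$, where both $c$ and $d$ remain free.

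For $Z(A)$ the same bi-grading argument applied to $[x,z]=[y,z]=0$
shows that the central monomials $x^iy^j$ are precisely those
satisfying $(e\mid j\text{ or }i=p-1)\wedge(e\mid i\text{ or
}j=p-1)$, and an inclusion--exclusion count gives
$\dim_kZ(A)=((p-1)/e)^2+2p-1$. Therefore
$\dim_k\IDer(A)=(p-1)^2-((p-1)/e)^2$, and subtracting from
$\dim_k\Der(A)$ yields the claimed formula $2(p+((p-1)/e)^2)$.
The main obstacle is the boundary bookkeeping: one has to
correctly distinguish when the $q$-commutator is automatic
(because a monomial in $f(x)y$ or $xf(y)$ already lies in the
ideal generated by $x^p$ or $y^p$) from when it forces a
nontrivial linear relation, and similarly when $f(x^p)=0$ or
$f(y^p)=0$ actually contributes the characteristic-$p$
dichotomy above.
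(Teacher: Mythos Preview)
Your approach is correct and essentially the same as the paper's: the paper writes a general derivation as $f(x)=\sum\alpha_{i,j}x^iy^j$, $f(y)=\sum\beta_{i,j}x^iy^j$, derives from $f(qxy-yx)=0$ the conditions $\alpha_{i,j-1}(1-q^{i-1})+\beta_{i-1,j}(1-q^{j-1})=0$ together with $\alpha_{0,j-1}=0=\beta_{i-1,0}$, counts the resulting relations to get $\dim_k\Der(A)=p^2+1+((p-1)/e)^2$, and subtracts $\dim_k\IDer(A)=p^2-\dim_kZ(A)$ --- exactly your bi-degree--by--bi-degree count with $(i,j)=(a+1,b+1)$. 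One small simplification you could adopt from the paper: the $q$-commutator relation alone already yields all the constraints (including the boundary ones $\alpha_{0,b}=0$ for $b\le p-2$), and the conditions $f(x^p)=0$, $f(y^p)=0$ are then automatic, so your discussion of $\sum_k q^{jk}$ is unnecessary --- it never imposes a condition not already forced by the $q$-commutator.
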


We start with some technical observations. The subset 
$$V' =\{x^iy^j\ |\ 0\leq i, j\leq p-1,\ (i,j)\neq(0,0) \}$$ 
of $A$ is a $k$-basis of $J(A)$, and the element $x^{p-1}y^{p-1}$ 
spans $\soc(A)$. For $r\geq $ $0$ the subset
\[ V_r= \{x^iy^j\ |\ 0\leq i, j\leq p-1,\ i+j\geq r\} \] 
of $V$ is a $k$-basis of $J(A)^r$

\begin{Lemma} \label{centralbasis}\ 

\begin{enumerate}
\item[\rm (i)]
The set $\{x^iy^j\ |\ 0\leq i,j\leq p-1, \,\ i\ \mathrm{and}\ j
\ \mathrm{divisible  \ by }  \ e,\ \mathrm{or}\ i=p-1,\ 
\mathrm{or}\ j=p-1\}$ 
is a $k$-basis of $Z(A)$. In particular, we have
$$\dim_k(Z(A))=\textstyle\left(\frac{p-1}{e} \right)^2+2p-1.$$
\item[\rm (ii)]
The set $\{x^{i}y^{p-1}, x^{p-1}y^{j} \ |   
\  p-e \leq i, j \leq p-1     \}$ is
a $k$-basis of $\soc(Z(A))$; in particular, we have 
$\dim_k(\soc(Z(A)))=$ $2e-1$.
\item[\rm (iii)]
We have $J(Z(A))\subseteq$ $J(A)^e$, and  if $e =2 $, then 
$\soc(Z(A))=\soc^2(A) \subseteq J(A)^2$.
\end{enumerate}
\end{Lemma}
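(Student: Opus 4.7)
Each of the three parts reduces to a direct calculation using the commutation rule $yx = qxy$, which iterates to $y^j x^i = q^{ij} x^i y^j$. For (i), I would compute for $z = \sum_{i,j} \alpha_{i,j} x^i y^j$ the commutators
\[
[x,z] = \sum_{i \leq p-2,\, j} \alpha_{i,j}(1 - q^j)\, x^{i+1} y^j,\qquad
[y,z] = \sum_{i,\, j \leq p-2} \alpha_{i,j}(q^i - 1)\, x^i y^{j+1},
\]
and read off from linear independence of the monomial basis of $A$ that $z \in Z(A)$ iff $\alpha_{i,j} = 0$ whenever ($i \leq p-2$ and $e \nmid j$) or ($j \leq p-2$ and $e \nmid i$). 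This gives the claimed basis, and setting $m = (p-1)/e$, the count $(m+1)^2 + 2m(e-1) = m^2 + 2p - 1$ yields the dimension formula.

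For (ii), I would characterise when a central basis element $x^a y^b$ lies in $\soc(Z(A))$ by the condition $a, b \geq p - e$. If $a \leq p - 1 - e$, then the central element $x^e \in J(Z(A))$ satisfies $x^e \cdot x^a y^b = x^{a+e} y^b$, which is a nonzero central basis element by part (i), so $x^a y^b \notin \soc(Z(A))$; symmetrically for $b \leq p - 1 - e$. For the converse I would check, for any central basis element $x^c y^d \neq 1$, that the product $(x^c y^d)(x^a y^b) = q^{ad} x^{a+c} y^{b+d}$ vanishes: either $c = p - 1$ or $d = p - 1$ (forcing one exponent to reach $p$ since $a, b \geq 1$), or else both $c, d$ are divisible by $e$ with $(c,d) \neq (0,0)$, so some exponent reaches $p$ via $(p - e) + e$. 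Enumerating the central basis elements $x^a y^b$ with $a, b \geq p - e$, and using $e \mid (p-1)$, yields exactly $x^{p-1} y^b$ for $p - e \leq b \leq p - 1$ and $x^a y^{p-1}$ for $p - e \leq a \leq p - 1$, overlapping in $x^{p-1} y^{p-1}$, for $2e - 1$ elements.

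For (iii), the inclusion $J(Z(A)) \subseteq J(A)^e$ is immediate from (i): each basis monomial $x^c y^d$ of $J(Z(A))$ satisfies $c + d \geq e$, either because $c, d$ are both divisible by $e$ and not both zero, or because $\max(c, d) = p - 1 \geq e$. When $e = 2$, I would combine this with Lemma \ref{soctwocentral} (giving $\soc^2(A) \subseteq Z(A)$) and the identity $J(A)^2 \cdot \soc^2(A) = 0$ to deduce $\soc^2(A) \subseteq \soc(Z(A))$; then I would compute $\soc^2(A) = (J(A)^2)^\perp$ via the symmetrising form, obtaining the basis $\{x^{p-1} y^{p-1}, x^{p-1} y^{p-2}, x^{p-2} y^{p-1}\}$ of dimension $3 = 2e - 1$. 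Equality with $\soc(Z(A))$ then follows by dimension from (ii), and $\soc^2(A) \subseteq J(A)^2$ is immediate since each listed monomial has total degree $\geq 2p - 3 \geq 2$.

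The calculations are routine throughout; the main subtlety is in (ii), where one must carefully match the ``multiples of $e$ or equal to $p-1$'' structure from (i) against the ``corner'' condition $a, b \geq p - e$, and in (iii), where the first inclusion must be invoked to chain $\soc^2(A)$ into $\soc(Z(A))$ before dimension counting closes out the $e = 2$ case.
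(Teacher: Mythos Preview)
Your proof is correct and follows essentially the same approach as the paper: compute commutators with $x$ and $y$ to identify the monomial basis of $Z(A)$, use annihilation by $x^e,y^e\in J(Z(A))$ to pin down $\soc(Z(A))$, and read off the degree bounds for (iii). Your treatment of (iii) is considerably more detailed than the paper's (which simply says the statement ``follows easily from the previous statements''); your route to $\soc^2(A)=\soc(Z(A))$ via Lemma~\ref{soctwocentral}, the inclusion $J(Z(A))\subseteq J(A)^2$, and a dimension count is perfectly valid, though one could also just observe directly that for $e=2$ both spaces have the explicit basis $\{x^{p-1}y^{p-1},\,x^{p-1}y^{p-2},\,x^{p-2}y^{p-1}\}$.
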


\begin{proof}
If $x$ and $y$ commute with a linear combination of
monomials in the set $V$, then $x$ and $y$ commute with the
monomials with nonzero coefficients in that linear combination.
Thus $Z(A)$ has a basis which is a subset of $V$. Clearly
$x$, $y$ commute exactly with the monomials $x^iy^j$ where either
both $i$, $j$ are   divisible  by $e$ or one of $i$, $j$ is $p-1$.
This shows that $Z(A)$ has a basis as stated in (i). Since $x^e$ and $y^e$
are in $J(Z(A))$, hence annihilate $\soc(Z(A))$, it follows that
$\soc(Z(A))$ is contained in the span of the  elements  of  
\[ \{ x^i y^{j} \,   | \, p-e \leq i, j \leq p-1\}  \cap Z(A) =
\{x^{i}y^{p-1}, x^{p-1}y^{j}\ |\ p-e \leq i, j \leq p-1\}. \]    
On the other hand, every element of 
$\{x^{i}y^{p-1}, x^{p-1}y^{j} \ |\ p-e \leq i, j \leq p-1\}$  
is annihilated by $J(Z(A))$, whence (ii).
Statement (iii) follows easily from the previous statements.
\end{proof}

\begin{Lemma} \label{commutatorbasis}
The set
$$\{x^iy^j\ |\ 1\leq i,j\leq p-1,\ i\ \mathrm{or}\ j
\ \mathrm{not  \,  divisible  \,  by \,  }  e \}$$
is a $k$-basis of $[A,A]$. In particular, we have
\[ \dim_k([A,A])=(p-1)^2 -  \textstyle\left(\frac{p-1}{e}\right) ^2 \]
and the space $[A,A]$ is contained in the ideal $Axy=$ $xyA$.
\end{Lemma}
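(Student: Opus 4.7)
The plan is to reduce the question to brackets of monomials and then combine linear independence with the symmetric-algebra identity $\dim_k[A,A] = \dim_k A - \dim_k Z(A)$. First, using $yx=qxy$, I would compute
\[
[x^i y^j,\, x^k y^l] \;=\; (q^{jk} - q^{li})\, x^{i+k} y^{j+l},
\]
since $y^j x^k = q^{jk} x^k y^j$ and similarly for the other order. This has two immediate consequences: the space $[A,A]$ is spanned by such terms, and any nonzero contribution satisfies $i+k\geq 1$ and $j+l\geq 1$, so $[A,A]\subseteq xyA = Axy$, which gives the last assertion of the lemma.

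Next I would show that every monomial in the proposed set
\[
S \;=\; \{x^a y^b \mid 1\le a,b\le p-1,\ \text{$a$ or $b$ not divisible by $e$}\}
\]
belongs to $[A,A]$. If $e\nmid a$, the identity $[x^a y^{b-1},\, y] = (1 - q^a)\, x^a y^b$ exhibits $x^a y^b$ as a commutator, since $1-q^a\ne 0$; symmetrically, if $e\nmid b$, then $[x,\, x^{a-1} y^b] = (1-q^b)\, x^a y^b$ does the job. Thus $S\subseteq [A,A]$, and $S$ is linearly independent as a subset of the monomial basis $V$.

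Finally I would count. The cardinality is $|S| = (p-1)^2 - \bigl(\tfrac{p-1}{e}\bigr)^2$, using that $e\mid p-1$ to count the multiples of $e$ in $\{1,\dots,p-1\}$. On the other hand, since $A$ is symmetric and split, $[A,A]^\perp = Z(A)$, and hence $\dim_k[A,A] = p^2 - \dim_k Z(A) = (p-1)^2 - \bigl(\tfrac{p-1}{e}\bigr)^2$ by Lemma \ref{centralbasis}(i). Matching cardinalities force $S$ to be a basis of $[A,A]$. There is no real obstacle; the only subtle point is that the dimension count implicitly handles the converse inclusion, namely that monomials $x^a y^b$ with both $a,b$ divisible by $e$ are \emph{not} in $[A,A]$. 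This can also be checked directly: for any decomposition $i+k=a$, $j+l=b$, the substitution $j=b-l$, $i=a-k$ gives $jk-li = bk - la$, which is $\equiv 0 \pmod e$ when $e\mid a,b$, so the coefficient $q^{jk}-q^{li}$ vanishes for every such decomposition.
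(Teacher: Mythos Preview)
Your proof is correct and follows essentially the same approach as the paper: exhibit each monomial in $S$ as an explicit commutator, then match the cardinality of $S$ against $\dim_k A - \dim_k Z(A)$ via Lemma~\ref{centralbasis}. Your general formula $[x^iy^j,x^ky^l]=(q^{jk}-q^{li})x^{i+k}y^{j+l}$ and the direct argument that monomials with $e\mid a$ and $e\mid b$ never arise are pleasant additions not spelled out in the paper, but the core strategy is identical.
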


\begin{proof}
Let $1\leq i, j  \leq p-1 $. If $ j$ is not divisible by $e$,  then
$$[x, x^{i-1}y^j] = x x^{i-1}y^j -x^{i-1} y^jx=  
(1- q^{j})  x^iy^j   \ne 0  $$ 
whence  $ x^{i}y^j \in [A,A] $. Similarly, if $i$ is not divisible by 
$e$, then 
\[ x^iy^j = (1-q^i)^{-1}  [y, x^i y^{j-1}] \in [A, A]. \]    
Thus the given set is contained in $[A, A] $  and it spans a subspace 
of $[A,A]$ of dimension $(p-1)^2 - \left(\frac{p-1}{e} \right) ^2 $.   
Since $\dim_k([A,A])=$ $\dim_k(A)-\dim_k(Z(A))$, the formula for 
$\dim_k([A,A])$ follows from Lemma \ref{centralbasis}. This dimension 
coincides with the  dimension of  the subspace spanned  by  the given 
set, whence the result.
\end{proof}

Let $f \colon A \to A$ be a derivation. Then $f(1)=$ $0$, and 
$f$ is uniquely determined by its values at $x$ and $y$. 
An easy induction shows that for any positive integer $n$ and 
$x_1, x_2,\dots,x_n\in A$, we have
$$f(x_1x_2\cdots x_n) = 
\sum_{i=1}^n\ x_1x_2\cdots x_{i-1} f(x_i) x_{i+1}\cdots x_n\ ;$$
in particular, for any $x\in$ $A$ we have
$f(x^n) =$ $\sum_{i=1}^{n}\ x^{i-1} f(x) x^{n-i}$.

\begin{Lemma} \label{derivationLemma}
For $0\leq i,j\leq p-1$, let $\alpha_{i,j}$, $\beta_{i,j}\in$ $k$.
There is a derivation $f \colon A\to A$ satisfying
\[ f(x) = \sum_{0\leq i,j\leq p-1}\ \alpha_{i,j} x^iy^j, \qquad
f(y) = \sum_{0\leq i,j\leq p-1}\ \beta_{i,j} x^iy^j  \]
if and only if the following hold.
\begin{enumerate}
\item[\rm (1)]
$\alpha_{i,j-1}(1-q^{i-1})+\beta_{i-1,j}(1-q^{j-1}) =0$ for $1\leq i,j\leq p-1$.
\item[\rm (2)]
$\alpha_{0,j-1}=0$ for $1\leq j\leq p-1$.
\item[\rm (3)]
$\beta_{i-1,0}=0$ for $1\leq i\leq p-1$.
\end{enumerate}
In particular, if $f$ is a derivation on $A$, then $f$ maps $J(A)$ to $J(A)$.
\end{Lemma}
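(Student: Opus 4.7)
My plan is to use the universal property of the presentation $A=k\langle x,y\rangle/(x^p, y^p, yx-qxy)$: a $k$-linear map specified by the values $f(x), f(y)\in A$ extends uniquely to a derivation on the free algebra, and this derivation descends to a derivation on $A$ if and only if the three elements $f(x^p)$, $f(y^p)$, and $f(yx-qxy)$, computed via the iterated Leibniz rule, all vanish in $A$. Each of these three vanishing conditions will be translated into linear conditions on the $\alpha_{i,j}$, $\beta_{i,j}$ using the commutation identity $y^n x^m = q^{mn} x^m y^n$.

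The principal computation is $f(yx-qxy)=yf(x)+f(y)x-qxf(y)-qf(x)y$. Substituting the prescribed formulas for $f(x)$ and $f(y)$ and collecting coefficients of the monomials $x^a y^b$, for $1\leq a,b\leq p-1$ the coefficient evaluates to $\alpha_{a,b-1}(q^a-q)+\beta_{a-1,b}(q^b-q)$; setting this to zero and dividing out the nonzero factor $q$ recovers condition~(1). The boundary cases $a=0$, $1\leq b\leq p-1$ and $b=0$, $1\leq a\leq p-1$ involve only two of the four terms and produce $(1-q)\alpha_{0,b-1}=0$ and $(1-q)\beta_{a-1,0}=0$ respectively, yielding conditions (2) and (3) since $q\neq 1$.

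It then remains to verify that $f(x^p)=0$ and $f(y^p)=0$ hold in $A$ automatically once (1)--(3) hold. By iterated Leibniz, $f(x^p)=\sum_{i=1}^p x^{i-1}f(x)x^{p-i}$; commuting the trailing $x$'s past each factor $y^s$ and discarding all terms in which the total power of $x$ exceeds $p-1$, only the monomials $x^{p-1}y^s$ coming from the $\alpha_{0,s}$-part of $f(x)$ survive, with coefficient $\alpha_{0,s}\sum_{j=0}^{p-1}q^{js}$. This geometric sum equals $0$ when $e\mid s$ (since $q^e=1$ and $p=0$ in $k$) and $1$ when $e\nmid s$ (since $q^p=q$ forces $q^{ps}=q^s$). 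Condition~(2) kills $\alpha_{0,s}$ for $0\leq s\leq p-2$, and for the remaining value $s=p-1$ the geometric sum itself vanishes because $e\mid (p-1)$; hence $f(x^p)=0$. The check for $f(y^p)=0$ is entirely symmetric, using condition~(3).

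For the ``in particular'' clause, condition~(2) at $j=1$ and condition~(3) at $i=1$ give $\alpha_{0,0}=\beta_{0,0}=0$, so $f(x),f(y)\in J(A)$. Since $J(A)$ is generated as an ideal by $x$ and $y$, the Leibniz rule then immediately forces $f(J(A))\subseteq J(A)$. The most delicate point in the whole argument is the terminal case $s=p-1$ in the verification $f(x^p)=0$, where the divisibility $e\mid(p-1)$ is used essentially; everywhere else it is needed only to ensure that $q$ is an honest root of unity of the stated order.
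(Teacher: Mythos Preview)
Your argument is correct and follows essentially the same approach as the paper: reduce the existence of the derivation to the vanishing of $f(yx-qxy)$, $f(x^p)$, and $f(y^p)$ in $A$, extract conditions (1)--(3) from the first, and observe that (2), (3) force $\alpha_{0,0}=\beta_{0,0}=0$. The only difference is that the paper merely asserts that (1)--(3) imply $f(x^p)=0=f(y^p)$, whereas you actually carry out this verification via the geometric-sum computation; your handling of the boundary case $s=p-1$ using $e\mid(p-1)$ is exactly the point that makes this work.
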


\begin{proof}
Suppose that $f$ is a derivation with the given values for
$x$ and $y$.
In the following sums,  unless otherwise indicated, the indices $i$ and $j$ 
run from $0$ to $p-1$. We have
\begin{align*} 
0 &=f(0)= f(qxy -yx) = qf(x)y- yf(x)  + qxf(y)-f(y)x \\
 &= \sum_{i,j}\ \alpha_{i,j}(qx^iy^{j+1}-yx^iy^j) +
\sum_{i,j}\ \beta_{i,j}(qx^{i+1}y^j -x^iy^jx) \\
&= \sum_{i,j}\ \alpha_{i,j}(q-q^i)x^iy^{j+1}  + 
\sum_{i,j}\ \beta_{i,j}(q-q^j)x^{i+1}y^j .
\end{align*}

The term in the first sum with $j=$ $p-1$ is zero, as is the
term in the second sum with $i=$ $p-1$ (this is where we use
the $p$-power relations $x^p=$ $0=$ $y^p$). We reindex the first
sum with $j$ running from $1$ to $p-1$, and the second sum with
$i$ running from $1$ to $p-1$, and we then separate the
terms with $i=0$ or $j=0$. This yields
$$ 0= (q-1) \left( \sum_{ j=1} ^{p-1}\alpha_{0, j-1} y^j + 
\sum_{ i=1}^{p-1} \beta_{i-1, 0} x^i \right)   + q \sum_{i,j =1} ^{p-1}\  
( \alpha_{i,j-1}(1-q^{i-1}) +  \beta_{i-1,j}(1-q^{j-1} ) )  x^iy^{j}. $$

Since $q \ne 1 $, the first two sums above yield the conditions (2) and 
(3). The third sum yields the condition (1).
In particular, $\alpha_{0,0}=$
$\beta_{0,0}=$ $0$, and hence $f(x)$, $f(y)\in$ $J(A)$.
This implies that $f$ sends $J(A)$ to $J(A)$. 

Conversely, there is a derivation $g$ from the free algebra 
$k\langle x,y \rangle$ in two generators (abusively again denoted $x$ and
$y$) to the algebra $A$ which takes on $x$ and $y$
the values as given in the statement. By construction, $g$ vanishes
on $qxy-yx$. The properties (1), (2)  and  (3) imply that $g$ 
vanishes also on $x^p$ and $y^p$. Thus $g$ induces a
derivation on $A$ with the required values for $x$ and $y$.
\end{proof}

\begin{Lemma} \label{Derdim}
We have $\dim_k(\Der(A)) = p^2 +1 +  (\frac{p-1}{e} )^2 $.  
\end{Lemma}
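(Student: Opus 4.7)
The plan is to use Lemma \ref{derivationLemma} to parametrise $\Der(A)$ by the two coefficient vectors $(\alpha_{i,j})_{0\le i,j\le p-1}$ and $(\beta_{i,j})_{0\le i,j\le p-1}$, which live in a $2p^2$-dimensional ambient space, and then to determine the rank of the system of linear equations (1), (2), (3) cutting out the subspace $\Der(A)$.

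First I would dispose of the easy constraints (2) and (3). Condition (2) forces $\alpha_{0,j}=0$ for $0\le j\le p-2$, and condition (3) forces $\beta_{i,0}=0$ for $0\le i\le p-2$. These are $2(p-1)$ clearly independent conditions and they involve variables disjoint from all those appearing in (1). After imposing them, the free parameters are $\alpha_{0,p-1}$ together with all $\alpha_{i,j}$ with $i\ge 1$, and symmetrically on the $\beta$-side, leaving $2(p^2-p+1)$ free coefficients.

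Next I would analyse condition (1) at each pair $(i,j)$ with $1\le i,j\le p-1$. The key structural observation is that the variable $\alpha_{a,b}$ appears in (1) only for the single index $(i,j)=(a,b+1)$, and similarly $\beta_{a,b}$ only for $(i,j)=(a+1,b)$; so the $(p-1)^2$ equations involve pairwise disjoint pairs of variables, making the independent equations easy to count. I then split into cases according to whether $q^{i-1}$ and $q^{j-1}$ equal $1$:
\begin{itemize}
\item If both $e\mid i-1$ and $e\mid j-1$, equation (1) is trivial; the number of such pairs is $\bigl(\tfrac{p-1}{e}\bigr)^2$.
\item In the remaining three cases the equation is a single nontrivial linear relation (setting a single variable to $0$, or expressing $\alpha_{i,j-1}$ as a nonzero scalar multiple of $\beta_{i-1,j}$).
\end{itemize}
Because distinct equations share no variables, all $(p-1)^2-\bigl(\tfrac{p-1}{e}\bigr)^2$ nontrivial equations are linearly independent.

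Combining the counts gives
\[ \dim_k(\Der(A))=2(p^2-p+1)-\Bigl((p-1)^2-\bigl(\tfrac{p-1}{e}\bigr)^2\Bigr)
= p^2+1+\bigl(\tfrac{p-1}{e}\bigr)^2, \]
as claimed. I do not expect any real obstacle: the combinatorics is bookkeeping, and the only point requiring mild care is verifying that the constraints from (1), (2), (3) are jointly independent. This follows immediately once one observes that the variables touched by (2) and (3) do not appear in (1) and that the variables linked by any one instance of (1) are untouched by every other instance.
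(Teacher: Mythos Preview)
Your proposal is correct and follows essentially the same approach as the paper: parametrise derivations by the $2p^2$ coefficients from Lemma~\ref{derivationLemma}, count trivial versus nontrivial instances of condition~(1), and subtract. Your argument is in fact slightly more explicit than the paper's in justifying that the $(p-1)^2-\bigl(\tfrac{p-1}{e}\bigr)^2$ nontrivial equations from (1), together with the $2(p-1)$ equations from (2) and (3), are linearly independent, via the observation that they involve pairwise disjoint variables.
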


\begin{proof}
A derivation $f \colon A\to$ $A$ is determined by the $2p^2$
coefficients $\alpha_{i,j}$, $\beta_{i,j}$ as in
Lemma \ref{derivationLemma}. Any assignment of values $f(x)$, $f(y)$ 
satisfying the conditions (1) to (3) in that lemma
determines a unique derivation.     
If $e$ divides both $i-1 $ and $j-1 $, then the condition (1) is trivially 
satisfied, otherwise (1) yields  a relation. 
Thus the condition (1) yields  $(p-1)^2- (\frac{p-1}{e})^2  $
relations. The conditions (2) and (3) each yield $p-1 $
relations. Thus, the total number of relations from 
Lemma~\ref{derivationLemma} is 
\[ (p-1)^2- \textstyle\left(\frac{p-1}{e}\right)^2  + 2(p-1) = p^2 -1
-\left(\frac{p-1}{e}\right)^2 \]  
and it follows that 
$\dim_k(\Der(A))=$ $p^2 +1 +  (\frac{p-1}{e})^2  $. 
\end{proof}

\begin{proof}[Proof of Proposition \ref{HHonedim}]
We have $\dim_k(A)=$ $p^2$ and 
$\dim_k(Z(A))=  \left(\frac{p-1}{e} \right)^2+2p-1$
from Lemma~\ref{centralbasis}. Thus $\dim_k(\IDer(A))=$
$p^2- \left(\frac{p-1}{e} \right)^2 -2p + 1$. It follows 
from Lemma \ref{Derdim} that 
\[ \dim_k(\HH^1(A))=  2\textstyle\left(\frac{p-1}{e}\right)^2 +2p, \] 
which completes the proof of Proposition \ref{HHonedim}.
\end{proof}

\section{The Lie algebra structure of $\HH^1(A)$}\label{LieStructure}

A Lie subalgebra $\CH$ of a Lie algebra $\CL$ is called {\it toral}
if the image of $\CH$ in the adjoint representation on $\CL$
is simultaneously diagonalisable (hence abelian). For semisimple
complex Lie algebras, the maximal toral Lie subalgebras are 
exactly the Cartan subalgebras. As in the previous section, let $k$ be 
a field of odd prime characteristic $p$,  let $1\ne q \in k^{\times}$ 
have order $e$ dividing $p-1$, and let
$$A = k \langle x,y\ |\ x^p=y^p=0,\ yx=qxy\rangle.$$

The technicalities needed for the proof of Theorem
\ref{HHoneLiestructure} are contained in the 
following series of lemmas. We start by identifying
innner derivations.

\begin{Lemma} \label{innerder}
For $i$, $j$ such that $0\leq i,j\leq p-1$, consider the
inner derivation $d_{i,j} = [x^iy^j, - ]$ on $A$.
\begin{enumerate}
\item[\rm (i)]
We have $d_{i,j}(x)=$ $(q^j-1)x^{i+1}y^j$, 
where $0\leq$ $i,j\leq$ $p-1$. In particular, we have
$d_{i,j}(x)=0$ if and only if $i=p-1$ or $e$ divides $j$.
\item[\rm (ii)] 
We have $d_{i,j}(y)=$ $(1-q^i)x^{i}y^{j+1}$,
where $0\leq$ $i,j\leq$ $p-1$. In particular, we have
$d_{i,j}(y)=0$ if and only if $j=p-1$ or $e$ divides $i$.
\item[\rm (iii)] 
Let $d$ be an inner derivation of $A$. 
Then $d(x)$ is a linear combination of monomials $x^iy^j$ with 
$1\leq$ $i,j\leq$ $p-1$ such that $e$ does not divide $j$. Similarly,
$d(y)$ is a linear combination of monomials $x^iy^j$ with 
$1\leq$ $i,j\leq$ $p-1$ such that $e$ does not divide $i$. 
\end{enumerate}
\end{Lemma}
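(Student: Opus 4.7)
The proof of Lemma \ref{innerder} will be a direct computation, after establishing the right commutation formulas in $A$.

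First I would prove by induction on $j$ (respectively $i$) that the relation $yx=qxy$ implies
\[ y^j x = q^j\, x y^j \quad \text{and} \quad y x^i = q^i\, x^i y \]
for all $0\leq i,j\leq p-1$. From these one immediately gets $x^i y^j \cdot x = q^j x^{i+1} y^j$ and $y\cdot x^i y^j = q^i x^i y^{j+1}$. Then
\[ d_{i,j}(x) = x^iy^j x - x\cdot x^iy^j = (q^j-1)x^{i+1}y^j, \]
\[ d_{i,j}(y) = x^iy^{j+1} - y\cdot x^iy^j = (1-q^i)x^i y^{j+1}, \]
proving the formulas in (i) and (ii). The vanishing criteria follow because $q$ has order $e$, so $q^j=1$ iff $e\mid j$, while $x^{i+1}y^j=0$ exactly when $i+1\geq p$, i.e.\ $i=p-1$ (and likewise $x^iy^{j+1}=0$ iff $j=p-1$). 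This handles (i) and (ii).

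For (iii), any inner derivation $d$ has the form $d=[a,-]$ with $a=\sum_{i,j}\lambda_{i,j}x^iy^j$, and by linearity
\[ d(x)=\sum_{0\leq i,j\leq p-1}\lambda_{i,j}(q^j-1)x^{i+1}y^j. \]
Only indices with $e\nmid j$ (so in particular $j\geq 1$) and $i\leq p-2$ contribute nonzero summands by (i); reindexing $i'=i+1$ gives an expression involving only monomials $x^{i'}y^j$ with $1\leq i'\leq p-1$, $1\leq j\leq p-1$, and $e\nmid j$. The argument for $d(y)$ is symmetric using (ii).

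The calculation is essentially routine; the only mild care required is keeping track of the boundary cases $i=p-1$ and $j=p-1$ where the $p$-power relations $x^p=0=y^p$ kick in, and observing that $e\nmid j$ automatically forces $j\geq 1$ (since $e\mid 0$), which is what ensures the monomials in $d(x)$ and $d(y)$ really lie in $J(A)^2$ with both exponents positive.
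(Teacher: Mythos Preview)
Your proof is correct and follows essentially the same approach as the paper: a direct computation of $d_{i,j}(x)$ and $d_{i,j}(y)$ using the commutation relation $yx=qxy$, followed by the observation that any inner derivation is a linear combination of the $d_{i,j}$. You have simply spelled out in more detail the commutation formulas and the reindexing that the paper leaves implicit.
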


\begin{proof}
Let $i$, $j$ be integers such that $0\leq i,j\leq p-1$. We have
\[ d_{i,j}(x)= [x^iy^j,x]= x^iy^jx-x^{i+1}y^j=(q^j-1)x^{i+1}y^j. \] 
This expression vanishes precisely if $q^j=1$
or if $i+1=p$, whence (i). As similar calculation proves (ii).
An inner derivation on $A$ is a linear combination of the inner
derivations $d_{i,j}$, where $0\leq$ $i,j\leq$ $p-1$. Thus
(iii) follows from (i) and (ii).
\end{proof}

Using Lemma \ref{derivationLemma}, we determine all derivations on $A$
mapping one of the generators to a single monomial and the other
to zero.

\begin{Lemma}\label{monomialderivation}
Let $a$, $b$, $c$, $d$ be integers such that 
$0\leq a,b,c,d\leq p-1$.
\begin{enumerate}
\item[\rm (i)]
There is a derivation $f_{a,b}$ on $A$ satisfying $f_{a,b}(x)=$
$x^ay^b$ and $f_{a,b}(y)=0$ if and only if $b=p-1$ or $a\geq$ $1$
and $e$ divides $a-1$. Moreover, in that case we have
$$f_{a,b}(x^cy^d) = \left(\sum_{s=0}^{c-1} q^{bs}\right) x^{a+c-1}y^{b+d}\ ,$$
with the convention that this is zero if $c=0$.
In particular, if $e$ divides $a-1$ and $b$ or if
$b=p-1$, then
$$f_{a,b}(x^cy^d) =  c x^{a+c-1}y^{b+d}\ .$$
\item[\rm (ii)]
There is a derivation $g_{a,b}$ on $A$ satisfying $g_{a,b}(x)=$
$0$ and $g_{a,b}(y)=x^ay^b$ if and only if $a=p-1$ or $b\geq$ $1$
and $e$ divides $b-1$. Moreover, in that case we have
$$g_{a,b}(x^cy^d) = \left(\sum_{t=0}^{d-1} q^{at}\right) x^{a+c}y^{b+d-1}\ ,$$
with the convention that this is zero if $d=0$.
In particular, if $e$ divides $a$ and $b-1$, or if $a=p-1$, then
$$g_{a,b}(x^cy^d) = d x^{a+c}y^{b+d-1}\ .$$
\end{enumerate}
\end{Lemma}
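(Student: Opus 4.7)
My plan is to prove part (i) directly from Lemma \ref{derivationLemma}, then obtain part (ii) by the same method (or by invoking the obvious $k$-linear symmetry that swaps $x\leftrightarrow y$ and $q\leftrightarrow q^{-1}$). First, I would specify the coefficients: for $f_{a,b}$ I take $\alpha_{a,b}=1$ with all other $\alpha_{i,j}=0$ and every $\beta_{i,j}=0$. Feeding this into the three conditions of Lemma \ref{derivationLemma} leaves only the special relation coming from the single nonzero coefficient. Condition (1) collapses to $\alpha_{a,b}(1-q^{a-1})=0$ arising at the index pair $(i,j)=(a,b+1)$, which is a constraint precisely when $b\leq p-2$; this forces $q^{a-1}=1$, i.e.\ $e\mid a-1$ (which in turn requires $a\geq 1$, since $e\geq 2$). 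Condition (3) is automatic since $\beta\equiv 0$, and condition (2) excludes the case $a=0$, $b\leq p-2$. Collecting, existence is equivalent to the disjunction $b=p-1$, or $a\geq 1$ and $e\mid a-1$, as stated.

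Next I would compute $f_{a,b}(x^cy^d)$ on basis monomials using $f_{a,b}(y)=0$. By the Leibniz rule for derivations recalled just before Lemma \ref{derivationLemma}, we have $f_{a,b}(x^cy^d)=f_{a,b}(x^c)y^d$, and
\[
f_{a,b}(x^c)=\sum_{s=0}^{c-1} x^{s}\,(x^{a}y^{b})\,x^{c-1-s}.
\]
I would then push each trailing $x^{c-1-s}$ past $y^{b}$ using the relation $yx=qxy$, which iterates to $y^{b}x^{c-1-s}=q^{b(c-1-s)}x^{c-1-s}y^{b}$. Collecting powers and reindexing $s\mapsto c-1-s$ gives
\[
f_{a,b}(x^cy^d)=\Bigl(\sum_{s=0}^{c-1} q^{bs}\Bigr)\,x^{a+c-1}y^{b+d},
\]
with the convention that the empty sum is zero when $c=0$. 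The special case then follows: if $e\mid b$ then $q^{b}=1$, and if $b=p-1$ then $q^{b}=q^{p-1}=1$ because $e\mid p-1$; either way $\sum_{s=0}^{c-1}q^{bs}=c$.

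For part (ii), I would mirror the argument: take $\beta_{a,b}=1$, all other $\beta$ and all $\alpha$ equal to zero, and check conditions (1)--(3). Condition (1) at $(i,j)=(a+1,b)$ forces $q^{b-1}=1$ when $a\leq p-2$, while condition (2) is automatic and condition (3) excludes $b=0$ when $a\leq p-2$; so existence is equivalent to $a=p-1$, or $b\geq 1$ and $e\mid b-1$. The computation is then dual: $g_{a,b}(x^cy^d)=x^c g_{a,b}(y^d)$, and expanding $g_{a,b}(y^d)=\sum_{t=0}^{d-1} y^t(x^ay^b)y^{d-1-t}$ using $y^tx^a=q^{at}x^ay^t$ yields the claimed formula, with the same simplification to $d$ when $e\mid a$ or $a=p-1$, since in both situations $q^{a}=1$.

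Essentially every step is bookkeeping, so there is no serious obstacle; the only place where care is required is the parsing of the edge cases in Lemma \ref{derivationLemma}, namely recognising that condition (1) is vacuous at indices outside the range $1\leq i,j\leq p-1$, which is exactly why the boundary alternatives $b=p-1$ in (i) and $a=p-1$ in (ii) appear in the existence criteria.
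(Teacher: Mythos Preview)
Your proof is correct and follows essentially the same approach as the paper: you specialise the coefficients in Lemma~\ref{derivationLemma}, read off the existence criterion from conditions (1)--(3), and compute $f_{a,b}(x^c)$ via the Leibniz expansion. The paper's only difference is cosmetic: it phrases the computation of $f_{a,b}(x^c)$ as a ``straightforward induction'' rather than writing out the commutation $y^bx^{c-1-s}=q^{b(c-1-s)}x^{c-1-s}y^b$ explicitly, and it handles (ii) simply by saying ``the proof is similar'' without invoking the $x\leftrightarrow y$, $q\leftrightarrow q^{-1}$ symmetry you mention.
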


\begin{proof}
With the notation of Lemma \ref{derivationLemma}, the condition
$f_{a,b}(y)=0$ is equivalent to the vanishing of all coefficients
$\beta_{i,j}$, where $0\leq$ $i,j\leq$ $p-1$. The condition
$f_{a,b}(x)=$ $x^ay^b$ is equivalent to $\alpha_{a,b}=1$ and the
vanishing of all remaining coefficients $\alpha_{i,j}$. 
If $1\leq$ $a\leq$ $p-1$ and $0\leq$ $b\leq$ $p-2$, then
the relation (1) from Lemma \ref{derivationLemma} yields
$0=$ $\alpha_{a,b}(1-q^{a-1})=$ $1-q^{a-1}$, hence that $e$
divides $a-1$. If $a=0$, then relation (2) from Lemma \ref{derivationLemma}
forces $b=$ $p-1$. Suppose now that $f_{a,b}$ is a derivation;
that is, $b=p-1$ or $a\geq$ $1$ and $e$ divides $a-1$. Since
$f_{a,b}(y)=0=f_{a,b}(1)$, an easy induction shows that 
$f_{a,b}(y^d)=0$.
Thus $f_{a,b}(x^cy^d)=$ $f_{a,b}(x^c)y^d$. Another straighforward
induction shows that $f_{a,b}(x^c)=$ 
$(\sum_{s=0}^{c-1} q^{bs}) x^{a+c-1}y^{b}$. Combining these 
facts yields the first formula in (i). If in addition $e$ 
divides $b$, then $q^b=1$, whence the second formula.
This proves (i), and the proof of (ii) is similar.
\end{proof}

Note the slight redundancy in the statement of 
Lemma \ref{monomialderivation}: if $0\leq$ $a\leq$ $p-1$ and $e$
divides $a-1$, then necessarily $a\geq$ $1$, since we assume that
$e\neq$ $1$. We determine next a linearly independent subset of
$\Der(A)$ whose image in $\HH^1(A)$ is a $k$-basis.

\begin{Lemma} \label{HHonebasis}
Let $a$, $b$, $c$, $d$ be integers such that $0\leq a,b,c,d\leq p-1$. 
Let $X$ be the disjoint union of the two sets of derivations 
\begin{gather*}
\{f_{a,b}\ |\ 0\leq a, b\leq p-1,\ 
e\ \mathrm{divides}\ a-1\ \mathrm{and}\ b,\ \mathrm{or}
\ b=p-1\} \\
\{g_{a,b}\ |\ 0\leq a, b\leq p-1,\ 
e\ \mathrm{divides}\ a\ \mathrm{and}\ b-1, \ \mathrm{or} 
\ a=p-1\}
\end{gather*}
The set $X$ is linearly independent, and its span 
$\mathcal{H}$ is a complement of $\IDer(A)$ in $\Der(A)$.
\end{Lemma}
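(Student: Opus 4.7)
The plan is to show that $|X|$ equals $\dim_k(\HH^1(A))$ and that $\mathrm{span}(X) \cap \IDer(A) = \{0\}$; combined with the dimension identity $\dim_k(\Der(A)) = \dim_k(\HH^1(A)) + \dim_k(\IDer(A))$ (which follows from Lemma \ref{Derdim}, together with $\dim_k(\IDer(A)) = \dim_k(A) - \dim_k(Z(A))$ and Lemma \ref{centralbasis}), these two facts immediately yield both the asserted linear independence of $X$ and that its span is a complement of $\IDer(A)$ in $\Der(A)$.

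First I would count $|X|$. Setting $m = (p-1)/e$, note that $e \mid p-1$ forces $e \mid b$ whenever $b = p-1$. The conditions defining the $f$-indexing set therefore split into the disjoint cases $b = p-1$ with arbitrary $a$ (contributing $p$ elements) and $b \ne p-1$ with $e \mid a-1$ and $e \mid b$ (contributing $m \cdot m = m^2$ elements, where $a = 0$ is automatically excluded in the second subcase since $e \geq 2$). Symmetrically the $g$-indexing set contributes $p + m^2$ elements, so $|X| = 2(p + m^2) = \dim_k(\HH^1(A))$ by Proposition \ref{HHonedim}.

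Linear independence of $X$ is then essentially immediate: the $k$-linear evaluation map $\Der(A) \to A \oplus A$, $h \mapsto (h(x), h(y))$, sends each $f_{a,b}$ to $(x^ay^b, 0)$ and each $g_{a,b}$ to $(0, x^ay^b)$, and these are distinct basis vectors of $A \oplus A$.

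Finally, for $\mathrm{span}(X) \cap \IDer(A) = \{0\}$, I would exploit the key disjointness observation that every $f_{a,b} \in X$ has $f_{a,b}(x) = x^ay^b$ with $e \mid b$ (using $e \mid p-1$ in the case $b = p-1$), while every $g_{a,b} \in X$ has $g_{a,b}(y) = x^ay^b$ with $e \mid a$. By contrast, Lemma \ref{innerder}(iii) asserts that for any $d \in \IDer(A)$, the element $d(x)$ lies in the span of monomials $x^iy^j$ with $e \nmid j$, and $d(y)$ in the span of those with $e \nmid i$. Hence if $h = \sum \lambda_{a,b} f_{a,b} + \sum \mu_{a,b} g_{a,b}$ is inner, then $h(x)$ lies simultaneously in the ``$e \mid b$'' and the ``$e \nmid b$'' spans and so vanishes, forcing all $\lambda_{a,b} = 0$; the symmetric analysis of $h(y)$ then forces all $\mu_{a,b} = 0$. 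The main (and only nontrivial) point is this disjointness step, and it comes out cleanly because the set $X$ has been selected precisely so that its action on the generators $x$ and $y$ lands in the part of the monomial basis of $A$ that is complementary to the image of $\IDer(A)$ identified in Lemma \ref{innerder}(iii).
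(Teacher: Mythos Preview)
Your proof is correct and follows essentially the same approach as the paper's: count $|X|$, observe linear independence via evaluation at $(x,y)$, and show trivial intersection with $\IDer(A)$ using the monomial disjointness from Lemma~\ref{innerder}(iii). One small quibble: your opening sentence suggests that $|X|=\dim_k\HH^1(A)$ together with trivial intersection already force linear independence of $X$, which they do not on their own; but since you then prove linear independence directly via the evaluation map, the argument is complete.
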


\begin{proof}
By Lemma \ref{monomialderivation}, the set $X$ indeed consists of
derivations.
The linear independence of the set $X$ of derivations  follows 
immediately from the fact that the set $V$ of monomials 
in $x$ and $y$ is a basis of $A$. The cardinality of the 
set $X$ is equal to $\dim_k(\HH^1(A))$, by Proposition \ref{HHonedim}.
Any nonzero linear combination of the derivations in $X$ 
map either $x$ or $y$ to a nonzero element in $A$. If $x$ 
is mapped to a nonzero element, this element involves a 
monomial $x^ay^b$ with $b$ divisible by $e$. But then Lemma \ref{innerder}
implies that this linear combination is not an inner derivation.
A similar argument applies if $y$ is mapped to a nonzero element. 
This shows that the space $\mathcal{H}$ spanned by $X$ intersects 
$\IDer(A)$ trivially. Since $\dim_k(\mathcal{H})=$ $|X|=$ 
$\dim_k(\HH^1(A))$, it follows that $\mathcal{H}$ is a complement 
of $\IDer(A)$ in $\Der(A)$. 
\end{proof}

We calculate next the Lie brackets between the elements of the basis
$X$ of $\mathcal{H}$. 

\begin{Lemma} \label{derLiestructure}
With the notation of Proposition \ref{HHonebasis}, let $a$, $b$, $c$, 
$d$ be integers such that $0\leq a,b,c,d\leq p-1$.
\begin{enumerate}
\item[\rm (i)]
Suppose that $e$ divides $a-1$ and $b$, or that $b=p-1$;
similarly, suppose that $e$ divides $c-1$ and $d$, 
or that $d=p-1$. If $a+c-1\leq$ $p-1$ and $b+d\leq$ $p-1$, then
$$[f_{a,b},f_{c,d}]= (c-a)f_{a+c-1,b+d}$$ 
and we have $b+d=p-1$ or $e$ divides both $a+c-2$ and $b+d$;
in particular, we have $f_{a+c-1,b+d}\in$ $X$. If
one of $a+c-1$ or $b+d$ is at least $p$, then
$$[f_{a,b},f_{c,d}]= 0\ .$$
\item[\rm (ii)]
Suppose that $e$ divides $a$ and $b-1$, or that $a=p-1$;
similarly, suppose that $e$ divides $c$ and $d-1$, or that
$c=p-1$. If $a+c\leq p-1$ and $b+d-1\leq p-1$, then
$$[g_{a,b},g_{c,d}]= (d-b)g_{a+c,b+d-1}$$ 
and we have $a+c=p-1$ or $e$ divides both $a+c$ and $b+d-2$;
in particular, we have $g_{a+c,b+d-1}\in$ $X$.
If one of $a+c$, $b+d-1$ is at least $p$, then 
$$[g_{a,b},g_{c,d}]= 0\ .$$
\item[\rm (iii)]
Suppose that $e$ divides $a-1$ and $b$, or that $b=p-1$;
similarly, suppose that $e$ divides $c$ and $d-1$, or that
$c=$ $p-1$. 
If $a+c>p-1$ or $b+d>p-1$, then
$$[f_{a,b}, g_{c,d}]= 0\ .$$
\item[\rm (iv)]
Suppose that $e$ divides $a-1$ and $b$, or that $b=p-1$;
similarly, suppose that $e$ divides $c$ and $d-1$, or that
$c=$ $p-1$. Suppose that $a+c\leq p-1$ and that $b+d\leq p-1$.
We have $a+c<p-1$ if and only if $b+d<p-1$, and in that case, 
we have
$$[f_{a,b}, g_{c,d}]= -bf_{a+c,b+d-1}+cg_{a+c-1,b+d}\ .$$ 
\item[\rm (v)]
Suppose that $e$ divides $a-1$ and $b$, or that $b=p-1$;
similarly, suppose that $e$ divides $c$ and $d-1$, or that
$c=$ $p-1$. Suppose that $a+c\leq p-1$ and that $b+d\leq p-1$.
We have $a+c=p-1$ if and only if $b+d=p-1$, and
in that case we have $(a,b,c,d)=$ $(0,p-1,p-1,0)$, and
$$[f_{0,p-1},g_{p-1,0}]= (q^{-1}-1)^{-1} [x^{p-2}y^{p-2},-]=
(q^{-1}-1)^{-1}d_{p-2,p-2}\ .$$
\end{enumerate}
\end{Lemma}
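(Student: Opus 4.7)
The plan is to verify each bracket by direct computation on the generators. Since every derivation on $A$ is determined by its values on $x$ and $y$, and the commutator of two derivations is again a derivation, it suffices to evaluate $[f_{a,b}, f_{c,d}]$ and $[f_{a,b}, g_{c,d}]$ on $x$ and $y$ using the explicit formulas from Lemma \ref{monomialderivation}, then match the outcome with the claimed right-hand sides. The key simplification is that the $X$-conditions force $q^b = 1 = q^d$ (since $e \mid b$ either outright, or because $b = p-1$ and $e \mid p-1$), so that a geometric sum such as $\sum_{s=0}^{c-1} q^{bs}$ collapses to $c$.

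For parts (i) and (ii), $f_{a,b}(y) = 0$ makes $[f_{a,b}, f_{c,d}](y) = 0$ automatic, and the only nontrivial calculation is
\[ [f_{a,b}, f_{c,d}](x) = f_{a,b}(x^c y^d) - f_{c,d}(x^a y^b) = (c-a)\, x^{a+c-1} y^{b+d} \]
after the two sums collapse. If one of $a+c-1$ or $b+d$ reaches $p$, the expression vanishes; otherwise it equals $(c-a)\, f_{a+c-1, b+d}$, and membership in $X$ follows from $e \mid (a-1) + c = a+c-2$ and $e \mid b+d$ in the generic case, or from $b+d = p-1$ in the boundary cases. Part (ii) is the symmetric argument under $x \leftrightarrow y$.

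For parts (iii)--(v), compute
\begin{align*}
[f_{a,b}, g_{c,d}](x) &= -g_{c,d}(x^a y^b) = -\Bigl(\sum_{t=0}^{b-1} q^{ct}\Bigr) x^{a+c} y^{b+d-1}, \\
[f_{a,b}, g_{c,d}](y) &= f_{a,b}(x^c y^d) = \Bigl(\sum_{s=0}^{c-1} q^{bs}\Bigr) x^{a+c-1} y^{b+d}.
\end{align*}
Out-of-range exponents kill monomials because $x^p = 0 = y^p$, which handles (iii). Under the hypotheses of (iv) the sums collapse to $-b\, x^{a+c} y^{b+d-1}$ and $c\, x^{a+c-1} y^{b+d}$, matching the values of $-b\, f_{a+c, b+d-1} + c\, g_{a+c-1, b+d}$ on $x$ and $y$. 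The equivalence $a+c = p-1 \iff b+d = p-1$ under the range constraints comes from a short case analysis on the $X$-conditions: Case A for both would give $e \mid a+c-1$, which together with $a+c-1 = p-1$ and $e \mid p-1$ contradicts $e \geq 2$; tracing the remaining boundary combinations forces $(a,b,c,d) = (0, p-1, p-1, 0)$. For (v), direct substitution using $q^{p-1} = 1$ and $p-1 \equiv -1 \pmod p$ gives $[f_{0, p-1}, g_{p-1, 0}](x) = x^{p-1} y^{p-2}$ and $[f_{0, p-1}, g_{p-1, 0}](y) = -x^{p-2} y^{p-1}$, which coincide with the values of $(q^{-1}-1)^{-1} d_{p-2, p-2}$ on $x$ and $y$, respectively. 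The main obstacle is the bookkeeping of divisibility conditions to ensure that the new indices lie in $X$, and the recognition in (v) that the bracket, though outside $\mathcal{H}$, is realized by the specific inner derivation $d_{p-2, p-2}$ up to a scalar.
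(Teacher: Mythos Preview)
Your approach is essentially the same as the paper's: evaluate each bracket on $x$ and $y$ using Lemma~\ref{monomialderivation}, collapse the geometric sums via $e\mid b$ and $e\mid c$ (both forced by the $X$-conditions since $e\mid p-1$), and match against the claimed right-hand sides. The computations for (i), (ii), (iv) and (v) are correct, apart from two small slips of the pen. In (i) you wrote $e\mid (a-1)+c=a+c-2$, but $(a-1)+c=a+c-1$; you meant $(a-1)+(c-1)=a+c-2$. In the equivalence argument for (iv)/(v) you wrote ``$a+c-1=p-1$'' where the hypothesis under analysis is $a+c=p-1$, so that $a+c-1=p-2$; then $e\mid a+c-1=p-2$ together with $e\mid p-1$ gives $e\mid 1$, which is the contradiction you intend. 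With these corrections your argument agrees with the paper's line by line.

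There is, however, a genuine gap in your handling of (iii), and it is the same gap present in the paper's own proof. The one-line justification ``out-of-range exponents kill monomials'' does not suffice: if $a+c=p$ (so that $a+c>p-1$), the exponent $a+c-1=p-1$ in $[f_{a,b},g_{c,d}](y)=c\,x^{a+c-1}y^{b+d}$ is \emph{in range}, and the monomial need not vanish. Concretely, take $(a,b,c,d)=(1,0,p-1,1)$; both $f_{1,0}$ and $g_{p-1,1}$ lie in $X$, we have $a+c=p>p-1$, yet $[f_{1,0},g_{p-1,1}](y)=f_{1,0}(x^{p-1}y)=(p-1)x^{p-1}y=-x^{p-1}y\neq 0$. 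This is consistent with Lemma~\ref{HHonederivedcenter}(ii), which gives $[f_{1,0},g_{p-1,1}]=(p-1)g_{p-1,1}\neq 0$. So statement (iii) as written is actually false; your argument (and the paper's) is valid only under the stronger hypothesis $a+c-1\geq p$ or $b+d-1\geq p$. This defect is inherited from the paper rather than a flaw in your method, but you should be aware that the boundary cases $a+c=p$ and $b+d=p$ are not covered by the out-of-range reasoning.
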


\begin{proof}
With the assumptions as in (i), both sides vanish at $y$, and
we need to show that they coincide at $x$. It follows from 
Lemma \ref{monomialderivation} (i) that 
$$[f_{a,b},f_{c,d}](x)= f_{a,b}(x^cy^d)-f_{c,d}(x^ay^b)=
cx^{a+c-1}y^{b+d}-ax^{a+c-1}y^{b+d}$$ 
This is a nonzero derivation only if $a+c-1\leq p-1$ and $b+d\leq$ 
$p-1$. If $b+d<p-1$,
then $b<p-1$ and $d<p-1$, hence $a-1$, $c-1$, $b$, $d$ are divisible
by $e$, and therefore $a+c-2$ and $b+d$ are divisible by $e$. This 
shows (i), and the proof of (ii) is similar.
With the assumptions as in (iii), we have
$$[f_{a,b},g_{c,d}](x) = f_{a,b}(g_{c,d}(x))-g_{c,d}(f_{a,b}(x))= 
-g_{c,d}(x^ay^b)=-bx^{a+c}y^{b+d-1}$$ 
where the last equation uses Lemma \ref{monomialderivation} (ii).
A similar calculation 
yields $[f_{a,b},g_{c,d}](y)=$ $cx^{a+c-1}y^{b+d}$, whence 
(iii). If $a+c=$ $p-1$, then $e$ divides $a$ (since $e$
divides $c$ and $p-1$), so $e$ does not divide $a-1$, and hence 
$b=$ $p-1$. The hypothesis $b+d\leq$ $p-1$ forces $d=$ $0$, so
$e$ does not divide $d-1$, and hence hence $c=$ 
$p-1$, which in turn forces $a=$ $0$ by the hypothesis $a+c\leq$ $p-1$. 
This shows that under the assumptions in (iv) and (v), we have 
$a+c=p-1$ if and only if $b+d=p-1$, which in turn holds if and only if 
$(a,b,c,d)=$ $(0,p-1,p-1,0)$. 
For the proof of (iv), assume that $a+c<p-1$ and $b+d<p-1$. Then 
all of $a$, $b$, $c$, $d$ are strictly smaller than $p-1$.
Thus $e$ divides $a-1$, $d-1$, $b$, and $c$, and therefore $e$ divides
$b+d-1$ and $a+c-1$. Hence $f_{a+c,b+d-1}$ and $g_{a+c-1,b+d}$ are in 
$X$. We have 
$$[f_{a,b}, g_{c,d}](x) = f_{a,b}(0)-g_{c,d}(x^ay^b) =
-bx^{a+c}y^{b+d-1}\ ,$$
where the last equation is from Lemma \ref{monomialderivation} (ii).
This is equal to ${}-bf_{a+c,b+d-1}+cg_{a+c-1,b+d}$ evaluated at $x$.
Similarly,  
$$[f_{a,b}, g_{c,d}](y) = f_{a,b}(x^cy^d)-g_{c,d}(0) =
cx^{a+c-1}y^{b+d}\ ,$$
where the last equation is from Lemma \ref{monomialderivation} (i).
This is equal to ${}-bf_{a+c,b+d-1}+cg_{a+c-1,b+d}$ evaluated at $y$.
The formula in (iv) follows. In order to prove (v), we need to
calculate
$$[f_{0,p-1},g_{p-1,0}](x) = f_{0,p-1}(0)-g_{p-1,0}(y^{p-1})=
x^{p-1}y^{p-2}\ ,$$
where the last equation uses Lemma \ref{monomialderivation} (ii) and
$-(p-1)=1$ in $k$. Similarly, we have
$$[f_{0,p-1},g_{p-1,0}](y) = f_{0,p-1}(x^{p-1})-g_{p-1,0}(0)=
-x^{p-2}y^{p-1}\ .$$
Note that $q^{p-2}=$ $q^{-1}$ since $e$ divides $p-1$.
By Lemma \ref{innerder}, we have
\[ d_{p-2,p-2}(x)=(q^{-1}-1)x^{p-1}y^{p-2}, \qquad
d_{p-2,p-2}(y)=-(q^{-1}-1)x^{p-2}y^{p-1}. \]
Statement (v) follows.
\end{proof}

The space $\mathcal{H}$ in Lemma \ref{HHonebasis} is not a Lie subalgebra of 
$\Der(A)$ because of the relation (v) in \ref{derLiestructure};
this relation implies that the images of $f_{0,p-1}$ and $g_{p-1,0}$
in $\HH^1(A)$ commute (because their Lie bracket is an inner
derivation). The Lie brackets between basis elements in $X$
determine the Lie algebra structure of $\HH^1(A)$. In order to
describe this structure, we first identify those elements in $X$
which are commutators.

\begin{Lemma} \label{HHonederivedcenter}
Let $a$, $b$, $c$, $d$ be integers such that 
$0\leq a,b,c,d \leq p-1$.
\begin{enumerate}
\item[\rm (i)]
If $e$ divides $a-1$ and $b$, or if $b=p-1$, then
$$[f_{1,0},f_{a,b}]= (a-1)f_{a,b}\ 
\text{and}\ [g_{0,1},f_{a,b}]= bf_{a,b}\ .$$
In particular, if $e$ divides $b$, then 
$$[f_{1,0},f_{1,b}]= 0\ 
\text{and}\ [g_{0,1},f_{1,b}]= bf_{1,b}\ .$$
\item[\rm (ii)]
Suppose that $e$ divides $c$ and $d-1$, or that $c=p-1$. We have
$$[f_{1,0},g_{c,d}]=cg_{c,d}\ 
\text{and}\ [g_{0,1},g_{c,d}]= (d-1)g_{c,d}\ .$$
In particular, if $e$ divides $c$, then 
$$[f_{1,0},g_{c,1}]=cg_{c,1}\ 
\text{and}\ [g_{0,1},g_{c,1}]= 0\ .$$
\item[\rm (iii)]
The linear endomorphisms $\ad(f_{1,0})$ and $\ad(g_{0,1})$ of 
$\Der(A)$ restrict to linear endomorphisms of the subspace $\CH$ 
spanned by $X$, and, with respect to the basis $X$, these endomorphisms 
of $\CH$ are represented by diagonal matrices.
\item[\rm (iv)]
All basis elements in $X$ except $f_{1,0}$ and $g_{0,1}$ 
are commutators in $\Der(A)$.
\item[\rm (v)]
We have $[f_{1,0},g_{0,1}]=0$.
\end{enumerate}
\end{Lemma}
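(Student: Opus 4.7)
The plan is to exploit the fact that both $f_{1,0}$ and $g_{0,1}$ fall under the simplified formulas of Lemma \ref{monomialderivation}. Since $e$ divides $0$ trivially, one has $f_{1,0}(x^cy^d)=c\,x^cy^d$ and $g_{0,1}(x^cy^d)=d\,x^cy^d$, so these two derivations act diagonally on the monomial basis of $A$. This reduces every bracket in (i)--(v) to a direct two-term evaluation at the generators $x$ and $y$, which is more economical here than invoking Lemma \ref{derLiestructure}.

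For (i), I would compute $[f_{1,0},f_{a,b}]$ and $[g_{0,1},f_{a,b}]$ by evaluating at $x$ and $y$ separately. The values at $y$ vanish trivially because $f_{a,b}(y)=0$ while $g_{0,1}(y)=y$ and $f_{1,0}(y)=0$. The values at $x$ reduce to applying the diagonal formulas above to the monomial $f_{a,b}(x)=x^ay^b$, yielding $(a-1)f_{a,b}$ and $b\,f_{a,b}$ respectively. The same procedure with the roles of $x$ and $y$ interchanged establishes (ii).

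Part (iii) follows immediately: (i) and (ii) show that every element of the basis $X$ is an eigenvector of both $\ad(f_{1,0})$ and $\ad(g_{0,1})$ acting on $\Der(A)$, so $\CH$ is stable under these operators and the corresponding matrices are diagonal in the basis $X$. For (iv), given $f_{a,b}\in X$ with $(a,b)\neq(1,0)$, at least one of the integers $a-1$ and $b$ is a nonzero element of $k$, hence a unit; rescaling $[f_{1,0},f_{a,b}]$ or $[g_{0,1},f_{a,b}]$ by the inverse of that scalar exhibits $f_{a,b}$ as a commutator, and the argument for $g_{c,d}\neq g_{0,1}$ is symmetric. Part (v) drops out of the same direct calculation: both $[f_{1,0},g_{0,1}](x)$ and $[f_{1,0},g_{0,1}](y)$ vanish because $g_{0,1}(x)=0$ and $f_{1,0}(y)=0$.

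No real obstacle is anticipated. The only bookkeeping is to confirm that $f_{1,0}$ and $g_{0,1}$ satisfy the divisibility hypotheses needed for the simplified formulas of Lemma \ref{monomialderivation}, which is automatic since every relevant coordinate is $0$, and to check that the scalars arising in (iv) lie in $\{-1,1,2,\ldots,p-2\}$ or $\{1,2,\ldots,p-1\}$ and are thus units in $k$.
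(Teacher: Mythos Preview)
Your proposal is correct and follows essentially the same route as the paper. The paper's proof simply records that (i) and (ii) are special cases of Lemma~\ref{derLiestructure} and that (iii)--(v) follow from them; you bypass Lemma~\ref{derLiestructure} and compute directly from the diagonal action $f_{1,0}(x^cy^d)=cx^cy^d$, $g_{0,1}(x^cy^d)=dx^cy^d$ given by Lemma~\ref{monomialderivation}, which is exactly what underlies the relevant cases of Lemma~\ref{derLiestructure} anyway.
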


\begin{proof}
The statements (i) and (ii) are special cases of Lemma \ref{derLiestructure},
and the statements (iii), (iv), and (v)  follow from (i) and (ii).
\end{proof}

\begin{Lemma} \label{Lprimebasis}
Let $a$, $b$, $c$, $d$ be integers such that $0\leq a,b,c,d \leq p-1$.
\begin{enumerate}
\item[\rm (i)]
Suppose that $e$ divides $a-1$ and $b$, or that $b=p-1$.
If $a+b\geq$ $2$, then $a\geq$ $e+1$ or $b\geq$ $e$.
In particular, $a+b-1\geq\min\{e,p-2\}$.
\item[\rm (ii)]
Suppose that $e$ divides $c$ and $d-1$, or that $c=p-1$.
If $c+d\geq$ $2$, then $c\geq$ $e$ or $d\geq$ $e+1$.
In particular, $c+d-1\geq\min\{e,p-2\}$.
\end{enumerate}
\end{Lemma}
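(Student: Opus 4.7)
My plan is to reduce both parts to a direct combinatorial case analysis of the hypotheses, relying on the fact that $e \mid p-1$ and $e \geq 2$. By the symmetry exchanging the roles of $x$ and $y$ (and consequently of $f_{a,b}$ with $g_{b,a}$), part (ii) is obtained from part (i) by relabelling $a \leftrightarrow d$, $b \leftrightarrow c$, so I will focus on (i).

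For part (i), I would split the hypothesis into two possibly overlapping cases. In \emph{Case A}, where $e\mid a-1$ and $e\mid b$, the integers $a,b$ are constrained to the arithmetic progressions $a\in\{1,e+1,2e+1,\ldots,p-e\}$ and $b\in\{0,e,2e,\ldots,p-1\}$; in particular $a\geq 1$ always. The only way to avoid the conclusion $a\geq e+1$ or $b\geq e$ is to have $a=1$ and $b=0$, but this gives $a+b=1<2$, contradicting the hypothesis $a+b\geq 2$. In \emph{Case B}, where $b=p-1$, the conclusion $b\geq e$ is immediate because $e\mid p-1$ and $e\geq 2$ force $e\leq p-1$. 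This settles the first assertion.

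For the ``in particular'' clause, I would distribute the estimate $a+b-1\geq\min\{e,p-2\}$ over three subcases. If $a\geq e+1$, then $a+b-1\geq e+b\geq e$. If $b\geq e$ and $a\geq 1$, then $a+b-1\geq e$. The residual case is $a=0$; but Case A always yields $a\geq 1$, so $a=0$ forces Case B, i.e.\ $b=p-1$, and then $a+b-1=p-2$. In every instance the bound $\min\{e,p-2\}$ is achieved or exceeded, and the ``$p-2$'' side of the minimum is only needed in the extremal subcase $a=0$, $b=p-1$ when $e=p-1$.

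I do not anticipate any significant obstacle: the statement is essentially a bookkeeping lemma extracting numerical consequences of the discrete constraints built into the indexing set $X$ of Lemma~\ref{HHonebasis}, and no properties of the algebra $A$ beyond those already encoded in the divisibility relations are required.
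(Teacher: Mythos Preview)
Your proposal is correct and follows essentially the same strategy as the paper: a direct combinatorial case analysis exploiting only the constraints $e\mid p-1$ and $e\geq 2$. The paper organises the cases by the size of $b$ (arguing first under $b<e$ and then noting the trivial case $b=p-1$), whereas you split according to which clause of the hypothesis is active; both arrive at the same conclusions, and your treatment of the ``in particular'' clause is in fact slightly more explicit than the paper's.
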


\begin{proof} 
Assume that $b<e$. Then $b<p-1$, so $e$ divides $a-1$ and $b$. 
The inequality $b<e$ forces $b=0$. Since $a+b\geq$ $2$, this
implies $a\geq$ $2$, hence $a-1\geq$ $1$. Since $e$ divides
$a-1$, it follows that $a-1\geq$ $e$, and hence $a+b-1\geq$ $e$.
If $b\geq$ $p-1$, then $a+b-1\geq$ $p-2$, whence (i). A similar
argument yields (ii).
\end{proof} 

\begin{proof}[Proof of Theorem \ref{HHoneLiestructure}]
Statement (i) is proved in Proposition \ref{HHonedim}.
We use the same notation as in Theorem \ref{HHoneLiestructure}; in particular,
$\CL=$ $\HH^1(A)$ and $\CL'$ is the derived Lie subalgebra of $\CL$.
It follows from Lemma~\ref{HHonederivedcenter} that $\CL'$ contains
the images of all elements of $X$ except possibly
the images of $f_{1,0}$ and $g_{0,1}$. 

The relations in Lemma \ref{derLiestructure} imply that $\CL'$ contains no
nonzero linear combination of the images of $f_{1,0}$ and $g_{0,1}$. 
Thus $\CL'$ has codimension $2$ in $\CL$.

A complement of $\CL'$ is spanned by the image of 
$\{f_{1,0}, g_{0,1}\}$, and this complement is a $2$-dimensional abelian 
Lie subalgebra of $\CL$, by Lemma \ref{HHonederivedcenter} (v). Moreover,
$\CL'$ has as a basis the image in $\HH^1(A)$ of the set 
$$X'=X\setminus \{f_{1,0},g_{0,1}\} \ .$$
Equivalently, $X'$ consists of all $f_{a,b}$, $g_{c,d}$ in 
$X$ with $a+b\geq$ $2$ and $c+d\geq$ $2$. 

It follows from Lemma \ref{HHonederivedcenter} (iii) that the images of 
$f_{1,0}$ and $g_{0,1}$ span a toral subalgebra. 
Lemma~\ref{HHonederivedcenter} implies that the 
centraliser in $\CL$ of the image of $f_{1,0}$ is spanned by the images 
of $f_{1,b}$, $g_{0,d}$ with $b$ and $d-1$ divisible by $e$. Similarly, 
the centraliser in $\CL$ of the image of $g_{0,1}$ is spanned by the 
images of $f_{a,0}$, $g_{c,1}$, with $a-1$ and $c$ divisible
by $e$. Thus $Z(\CL)$ is contained in the span of the images of 
$f_{1,0}$ and $g_{0,1}$, but it follows again from 
Lemma \ref{HHonederivedcenter} that no nonzero linear combination of these 
two elements is in the center. This shows that $Z(\CL)=$ $\{0\}$ and 
that the toral subalgebra $\CH$ is maximal. This proves (ii) and (iii).

For $m\geq$ $1$ denote by $\CL_m$ the subspace of $\CL$ spanned by the 
images of those $f_{a,b}$, $g_{c,d}$ in $X$ for which $a+b\geq m$ and 
$c+d \geq m$. Thus $\CL_1=$ $\CL$,
$\CL_2=$ $\CL'$, and $\CL_m=$ $\{0\}$ for $m\geq$ $2p$. 
The relations in Lemma \ref{derLiestructure}
imply that $[\CL',\CL_m]\subseteq$ $\CL_{m+1}$, which in turn
implies that $\CL'$ is nilpotent, whence (iv).

The socle of $\CL$ as a $Z(A)$-module is contained in the subspace of 
$\CL$ which is annihilated by $x^e$ and $y^e$. 
We have $x^ef_{a,b}=$ $f_{a+e,b}$ if $a+e\leq p-1$, and
$x^ef_{a,b}=0$ if $a\geq$ $p-e$. Similarly, we have 
$y^ef_{a,b}=$ $f_{a,b+e}$ if $b+e\leq$ $p-1$ and 
$y^ef_{a,b}=0$ if $b\geq$ $p-e$.
It follows that the socle of $\CL$ as a $Z(A)$-module
is equal to the subspace of $\HH^1(A)$ which is annihilated by $x^e$ and 
$y^e$. 
Thus the image of $f_{a,b}$ in $\CL$ is contained in 
$\soc_{Z(A)}(\CL)$ if $a\geq$ $p-e$ and $b\geq$ $p-e$.
Since also $e$ divides both $a-1$ and $b$ or $b=p-1$,
this forces $b=p-1$. Similarly, the image of $g_{a,b}$ in $\CL$
is contained in $\soc_{Z(A)}(\CL)$ if and only if $b\geq$ $p$
and $a=p-1$.  It follows that $\soc_{Z(A)}(\CL)$ is equal to the 
space spanned by the image in $\CL$ of the set
$$S = \{f_{a,p-1}\ |\ p-e\leq a\leq p-1\}\cup
\{g_{p-1,b}\ |\ p-e\leq b\leq p-1\}$$
This shows in particular that
$$\dim_k(\soc_{Z(A)}(\CL)) = 2e\ .$$ 
The relations in Lemma \ref{derLiestructure} imply that we have
$[X',S]=$ $\{0\}$, and hence we have an inclusion
$$\soc_{Z(A)}(\CL) \subseteq Z(\CL')\ .$$ 
This proves (v).

By the above and Lemma \ref{Lprimebasis},  $\CL'$ is spanned by the 
images of elements $f_{a,b}$, $g_{c,d}$ where at least one of $a$, $b$ 
is greater or equal to $e$, and where at least one of $c$, $d$ is 
greater or equal to $e$. Thus $\CL'$ is contained in $x^e\CL+y^e\CL$. 
Since no nonzero linear combination of the images of
$f_{1,0}$, $g_{0,1}$ is contained in $J(Z(A))\CL$, statement
(vi) follows.

In order to prove (vii), suppose first that $e=p-1$. In that case we have
$$X'=\{f_{a,p-1}\ |\ 0\leq a\leq p-1\}\cup 
\{g_{p-1,b}\ |\ 0\leq b\leq p-1\}$$ 
The images in $\CL$ of any two elements of $X'$ commute; more
precisely, any two elements in $X'$ commute already in $\CH$, except 
for $[f_{0,p-1},g_{p-1,0}]$, which is inner by Lemma \ref{derLiestructure} 
(v). This shows that $\CL'$ is abelian if $e=p-1$. 

Suppose that $e<p-1$; in particular, $e\leq$ $\frac{p-1}{2}$.
We consider the basis $X'=$ $X\setminus \{f_{1,0}, g_{0,1}\}$
of $\CL'$. Since $e<p-1$, there are derivations $f_{e+1,0}$,
$g_{0,e+1}$, $f_{1,e}$, and $g_{e,1}$ in $X'$. 
Using Lemma~\ref{derLiestructure}, one  verifies that the Lie brackets of 
any of these four elements with any element in $X'$ yield elements in 
$X$, possibly multiplied by scalars (which can be zero). It follows 
that in order to calculate centralisers in $\CL'$ of these four
particular elements, it suffices to 
calculate centralisers in the space $\CH'$ spanned by $X'$. 
It follows further that if one of the above four elements
centralises a linear combination of elements in $X'$, it
centralises the elements of $X'$ with nonzero coefficients
individually.   
A tedious verification, using Lemma \ref{derLiestructure}, shows
that the centraliser of $f_{e+1,0}$ in $X'$ intersected with
the centraliser of $g_{0,e+1}$ is the set $S_1\cup S_2$, where
$$S_1 = \{f_{a,p-1}\ |\ p-e\leq a\leq p-1\}\cup 
\{f_{e+1,0}, f_{p-e,0}, f_{e+1,p-1}\}$$
and 
$$S_2= \{g_{p-1,d}\ |\ p-e\leq d\leq p-1\}\cup 
\{g_{0,e+1}, g_{0,p-e}, g_{p-1,e+1}\}$$
The element $f_{1,e}$ does not centralise any of the 
two elements $f_{e+1,0}$ and $f_{p-e,0}$. Similarly, $g_{e,1}$
centralises neither $g_{0,e+1}$ nor $g_{0,p-e}$. Thus
every element in $Z(\CL')$ is the image of a linear combination 
of the set
$$S_3 = \{f_{a,p-1}\ |\ p-e\leq a\leq p-1\}\cup 
\{f_{e+1,p-1}\} \cup
\{g_{p-1,d}\ |\ p-e\leq d\leq p-1\}\cup 
\{g_{p-1,e+1}\}$$
One verifies that the image of $S_3$ is contained in $Z(\CL')$.
The cardinality of $S_3$ is $2e+2$. Statement (vii) follows.

For the last statement, note that the $p$-power map on $\CL$ is
induced by the map sending a derivation $f$ on $A$ to the composition 
$f^{[p]}=$ $f\circ f\circ\cdots \circ f$ of $f$ with itself $p$ times. 
We clearly have $(f_{1,0})^{[p]}=$
$f_{1,0}$, and $(g_{0,1})^{[p]}=$ $g_{0,1}$; that is, the images of
$f_{1,0}$ and $g_{0,1}$ in $\CL$ are $p$-toral. Since the image of 
$\{f_{1,0},g_{0,1}\}$ in $\CL$ is a basis of $\CH$, this shows that
$\CH$ is $p$-toral. Any element of $X'=$ $X\setminus\{f_{1,0}, g_{0,1}\}$
is of the form $f_{a,b}$ or $g_{a,b}$ with $a+b-1\geq$ $e$ or
$a+b-1\geq p-2$ (the latter arises if $a$ or $b$ is equal to $p-1$). 
Consider first the case where $p\geq$ $5$, so that $p-2\geq$ $3$. Since 
$e\geq$ $2$, it follows that $a+b-1\geq$ $2$. Lemma 
\ref{monomialderivation} implies that any derivation in $X'$ sends a 
monomial in $x$, $y$ of total degree $m$ to a scalar multiple of a 
monomial of total degree at least $m+2$. Thus any composition of $p$ 
elements in $X'$ sends a monomial in $x$, $y$ of degree at least $1$ to 
a scalar multiple of a monomial $x^cy^d$ of total degree $c+d\geq$ 
$1+2p$. This implies that at least one of $c$, $d$ is greater that $p$, 
which in turn implies that $x^cy^d=0$ in $A$. It follows that any 
composition of $p$ elements in $X'$ is zero. Therefore, if $f$ is a 
linear combination of elements in $X'$, then $f^{[p]}=0$. Since the 
image in $\CL$ of $X'$ is a basis of $\CL'$, this proves (viii) in the 
case $p\geq$ $5$. If $p=3$, then $e=2$, and we have 
$$X'= \{f_{0,2}, f_{1,2},f_{2,2}, g_{2,0}, g_{2,1},g_{2,2}\}\ .$$
A direct verification shows that the composition of any three 
derivations in this set is zero, completing the proof.  
\end{proof}

\begin{proof}[{Proof of Corollary \ref{radicaldimCor}}]
A stable equivalence of Morita type preserves the
Tate analogue of Hochschild cohomology, hence
preserves $\HH^1(A)$ as a module over $\HH^0(A)\cong$
$Z(A)$ since the projective ideal in $Z(A)$ annihilates
Hochschild cohomology in positive degrees. The
corollary follows from statement (v) in Theorem
\ref{HHoneLiestructure} together with Corollary
\ref{soclederivationlocal}.
\end{proof}

\begin{proof}[{Proof of Corollary \ref{blockradicaldimCor}}]   
First consider the case that $B$ is nilpotent. By the structure
theorem of nilpotent blocks \cite[(1.4.1)]{Puig}, $B$ is a matrix algebra over $kP$.  
Since $P$ is either cyclic or elementary abelian of order $p^2$,  
$\dim_k(J(B)/J(B)^2)$ equals $1$ or $2$, and the result holds.    
Thus,  we may assume that  $B$ is not nilpotent. In particular,  
since $P$ is abelian, $I > 1$ \cite[(1.ex.3)]{BrPu}. 
If $P$ is cyclic, then since    
$C$ has a unique isomorphism class of simple modules, $ I=1 $, 
a contradiction. Thus, we may assume that $P$ is elementary
abelian. By \cite[Theorem~1.1]{KeLi}, the inertial quotient    
of $B$ is abelian. 
By the structure theory of blocks with normal defect group 
(\cite[Theorem A]{Ku} or \cite[\S45]{Thev}), $C$ 
is a matrix algebra over a twisted group algebra of the semidirect
product of $P$ with the inertial quotient of $B$.
Hence, since $C$ has a unique isomorphism class of simple modules, 
by \cite[Lemma 2]{DeJa}, the inertial 
quotient of $B$ is a direct product of two cyclic groups of order 
$\sqrt I$ (see for instance the  proof of Theorem 1.1 and  
Proposition 5.3 of \cite{KeLi}).  By Theorem 4.2 and Corollary 4.3  
of \cite{HoKe} and their proofs, a basic algebra of $C$ is    
isomorphic to  the algebra $A$ of Theorem \ref{HHoneLiestructure}  
with $1<e \le \sqrt I $.   By \cite[Theorem 1.1]{KeLi},   $B$ is local. Finally, by \cite[Theorem A.2] {Li09},  
there is a stable equivalence of Morita type between $B$ and $C$.  
The result now follows from Corollary \ref{radicaldimCor}.
\end{proof}

\section{Lifting quantum complete intersections over $\CO$}
\label{liftingSection}

Let $p$ be an odd prime and $\CO$ a complete discrete valuation
ring containing a primitive $4p$-th root of unity, with residue field 
$k$ of characteristic $p$ and field of fractions $K$ of characteristic 
$0$.

We denote in this section by $G$ a finite group obtained as
a semi-direct product of an elementary abelian $p$-group
$$P= \langle g\rangle \times \langle h\rangle \cong C_p\times C_p$$
of rank two by a quaternion
group $Q_8=$  $\langle s,t\ |\ s^4=1, s^2=t^2, sts^3=t^3\rangle$
of order $8$, acting on $P$ by $sgs^{-1}=g^{-1}$, $shs^{-1}=h$, 
$tgt^{-1}=g$, and $tht^{-1}= h^{-1}$. In particular, the unique
central involution $z=$ $s^2=$ $t^2$ of $Q_8$ acts trivially on $P$,
hence $Z(G)= \langle z\rangle$. The group algebra $\OG$ has
two blocks, the principal block $B_0=\OG e_0$, where
$e_0=\frac{1}{2}(1+z)$, and one nonprincipal block 
$B_1=\OG e_1$, where $e_1=\frac{1}{2}(1-z)$. The
block $B_1$ has a unique isomorphism class of simple modules,
and more precisely, the quantum complete intersection 
$$A = k\langle x, y\ |\ x^p=y^p=0,\ xy+yx=0\rangle$$ 
is a basic algebra of $k\tenO B_1$. We determine the structure
of a basic algebra of $B_1$. To do this, we will require the
Chebyshev polynomials $T_n$ of the first kind.
For $n\geq$ $0$, the polynomial $T_n$ in the variable $u$ is the unique
polynomial in $\Z[u]$ of degree $n$ satisfying $T_n(\cos(\theta))=$
$\cos(n\theta)$ for any $\theta\in$ $\R$. Using $\sin(\theta)=$
$\cos(\theta-\frac{\pi}{2})$ we obtain for $n$ odd the formula
$$\sin(n\theta) = (-1)^{\frac{n-1}{2}}T_n(\sin(\theta))\ .$$
The polynomials $T_n$ can be defined recursively by $T_0(u)=$ $1$, 
$T_1(u)=$ $u$, and $T_{n+1}(u) = uT_n(u)-T_{n-1}(u)$ for $n\geq$ $1$.
This recursion formula shows that the leading coefficient of
$T_n$ is $2^{n-1}$. It also shows that for $n$ even (resp. odd), the 
polynomial $T_n$ is involves only even (resp. odd) powers of the
variable $u$. 
For $n\geq$ $0$, define a polynomial $f_n$ in the variable $u$ by
$$f_n(u) = 2T_n(\textstyle\frac{u}{2})\ .$$
Then $f_0(u)=2$, $f_1(u)=$ $u$, and $f_{n+1}(u)=$ $uf_n(u)-f_{n-1}(u)$.
In particular, $f_n$ is a polynomial in $\Z[u]$ with leading coefficient 
$1$, and if $n$ is even (resp. odd), then $f_n$ involves only even (resp. 
odd) powers of $u$. 
The well-known explicit formulae for Chebyshev polynomials imply 
that if $n=p$, then all coefficients of $f_p$ other than the leading
coefficient of $f_p$ are divisible by $p$, and hence $f_p$ reduces
to the monomial $u^p$ in $k[u]$. 

\begin{Theorem} 
With the notation above, let $\hat A$ be the $\CO$-algebra
$$\hat A = \CO\langle \gamma, \delta \ |\ \gamma\delta+\delta\gamma=0,
\ f_p(\gamma)=0=f_p(\delta)\rangle$$  
Then $\hat A$ is a basic algebra of $B_1$; in particular, we have
$k\tenO \hat A\cong$ $A$.
\end{Theorem}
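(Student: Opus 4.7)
The plan is to realise $\hat A$ concretely as the corner algebra $\epsilon B_1\epsilon$ for an explicit primitive idempotent $\epsilon$ of $B_1$. Since the central involution $z$ acts as $-1$ on $B_1$, writing $\bar s = se_1$, $\bar t = te_1$, $\tilde g = ge_1$, $\tilde h = he_1$, one has in $B_1$ the relations $\bar s^2 = \bar t^2 = -1$, $\bar s\bar t = -\bar t\bar s$, $\tilde g^p = \tilde h^p = 1$, $\bar s\tilde g\bar s^{-1} = \tilde g^{-1}$, $\bar s\tilde h\bar s^{-1} = \tilde h$, $\bar t\tilde g\bar t^{-1} = \tilde g$, and $\bar t\tilde h\bar t^{-1} = \tilde h^{-1}$. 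Fix a primitive fourth root of unity $\imath\in\CO$ (available since $\CO$ contains a primitive $4p$-th root of unity). Then $(\imath\bar s)^2 = 1$ in $B_1$, so $\epsilon = \tfrac12(1+\imath\bar s)e_1$ and $e_1-\epsilon$ are orthogonal idempotents with sum $e_1$; by the symmetry $\bar s\mapsto -\bar s$ they are conjugate, so each has the same rank over $\CO$, and since $B_1$ has a unique isomorphism class of simple modules (of $k$-dimension $2$, coming from the faithful irreducible of $Q_8$) they are primitive. Hence $\epsilon B_1\epsilon$ is a basic algebra of $B_1$, $\CO$-free of rank $p^2$.

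Next I set $\Gamma = \bar t(\tilde g - \tilde g^{-1})\epsilon$ and $\Delta = \bar s(\tilde h - \tilde h^{-1})\epsilon$, with the goal of showing these satisfy the defining relations of $\hat A$. Using $\bar s\bar t = -\bar t\bar s$ together with $\bar s(\tilde g - \tilde g^{-1})\bar s^{-1} = -(\tilde g-\tilde g^{-1})$, one checks that both $\bar t(\tilde g-\tilde g^{-1})$ and $\bar s(\tilde h-\tilde h^{-1})$ commute with $\bar s$, hence with $\epsilon$, so $\Gamma,\Delta\in\epsilon B_1\epsilon$. A similar direct calculation, exploiting that $\tilde g$ and $\tilde h$ commute and $\bar s\bar t = -\bar t\bar s$, yields $\Gamma\Delta+\Delta\Gamma = 0$.

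The key computation is $f_p(\Gamma) = 0$. Since $\bar t$ commutes with $w := \tilde g - \tilde g^{-1}$, one has $\Gamma^n = \bar t^n w^n\epsilon$; combined with $\bar t^{2m+1} = (-1)^m\bar t$ and the fact that $f_p$ is a polynomial in $u$ involving only odd powers (as $p$ is odd), a short rearrangement gives
\[ f_p(\Gamma) \;=\; \imath^{-1}\bar t\, f_p(\imath w)\,\epsilon, \]
so the task reduces to verifying $f_p(\imath w) = 0$ in $\CO P$. Since $\CO P$ is $\CO$-torsion free, this can be checked after tensoring with $K$, where $K\otimes_\CO\CO P\cong\prod_{\zeta^p = 1} K$ via the characters of $P$. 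At the character $\tilde g\mapsto\zeta = e^{\imath\theta}$ one has $\imath w\mapsto -2\sin\theta$, and the identity $\sin(p\theta) = (-1)^{(p-1)/2}T_p(\sin\theta)$ together with $\zeta^p = 1$ forces $T_p(\sin\theta) = 0$, hence $f_p(-2\sin\theta) = 2T_p(-\sin\theta) = 0$. The argument for $f_p(\Delta) = 0$ is identical by symmetry.

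These verifications produce an $\CO$-algebra homomorphism $\phi\colon\hat A\to\epsilon B_1\epsilon$ sending $\gamma\mapsto\Gamma$ and $\delta\mapsto\Delta$. The defining relations of $\hat A$ immediately show that the monomials $\{\gamma^i\delta^j\mid 0\le i,j\le p-1\}$ span $\hat A$ over $\CO$, so $\hat A$ has $\CO$-rank at most $p^2$. To conclude that $\phi$ is an isomorphism I would reduce modulo the maximal ideal of $\CO$: since all non-leading coefficients of $f_p$ are divisible by $p$, the presentation of $\hat A$ reduces to precisely that of $A$, and the reduced map $\bar\phi\colon A\to\epsilon(k\tenO B_1)\epsilon$ lands in a $k$-algebra which, by hypothesis and the primitivity of $\bar\epsilon$, is a basic algebra of $k\tenO B_1$ and hence isomorphic to $A$. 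Since $A$ is local of dimension $p^2$, a dimension count together with the nontriviality of $\bar\phi(\gamma)$ and $\bar\phi(\delta)$ forces $\bar\phi$ to be an isomorphism; Nakayama's lemma applied to $\coker(\phi)$ then lifts surjectivity of $\bar\phi$ to $\phi$, and the rank bound forces $\phi$ to be an isomorphism of $\CO$-free modules of rank $p^2$. The most delicate step is the Chebyshev identity $f_p(\imath w) = 0$, which is exactly where the recursion defining the polynomials $f_p$ enters.
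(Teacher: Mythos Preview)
Your approach is close to the paper's but uses a different model for the basic algebra: you take the corner $\epsilon B_1\epsilon$ for a primitive idempotent $\epsilon=\tfrac12(1+\imath\bar s)e_1$, whereas the paper uses the centraliser $\hat A=B_1^{Q_8}=C_{B_1}(S)$, invoking the tensor decomposition $B_1=S\tenO\hat A$ with $S=\CO Q_8e_1\cong M_2(\CO)$. These two models are canonically isomorphic via $a\mapsto a\epsilon$, and under this map the paper's generators $\gamma=(g-g^{-1})te_1$, $\delta=(h-h^{-1})se_1$ go to your $\Gamma$, $\Delta$. The verification of the anticommutation relation and of $f_p(\Gamma)=0$ via the Chebyshev identity $f_p(u+u^{-1})=u^p+u^{-p}$ (evaluated at $u=\imath\zeta$) is essentially identical in both arguments.

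There is, however, a genuine gap in your final step. You assert that ``a dimension count together with the nontriviality of $\bar\phi(\gamma)$ and $\bar\phi(\delta)$ forces $\bar\phi$ to be an isomorphism''. This is false: for $p\ge 5$ the assignment $\gamma\mapsto\gamma^3$, $\delta\mapsto\delta^3$ defines a $k$-algebra endomorphism of $A$ (the relation $\gamma^3\delta^3+\delta^3\gamma^3=0$ holds since $\gamma\delta=-\delta\gamma$) which is neither injective nor surjective, yet sends both generators to nonzero elements. What you actually need is that the images $\bar\Gamma,\bar\Delta$ are linearly independent modulo $J(\epsilon(k\tenO B_1)\epsilon)^2$, or equivalently that $\bar\phi$ is surjective. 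The paper handles this by showing directly that $\gamma,\delta$ generate $B_1^{Q_8}$: since $(g-g^{-1})e_1$ and $(h-h^{-1})e_1$ together with $e_1$ generate $\CO Pe_1$, the set $\{se_1,te_1,\gamma,\delta\}$ generates $B_1$, and the tensor decomposition $B_1=S\tenO\hat A$ then forces $\gamma,\delta$ to generate $\hat A$. In your corner model an equivalent fix is to check that $\bar\Gamma^{p-1}\bar\Delta^{p-1}\neq 0$: this equals $(g-g^{-1})^{p-1}(h-h^{-1})^{p-1}\bar\epsilon$, which is a nonzero scalar multiple of the socle element $N_P\bar\epsilon$ and hence nonzero. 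Since $A$ has a unique minimal ideal, this shows $\ker\bar\phi=0$, and the dimension count finishes the argument.
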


\begin{proof}
Since $e_1$ annihilates the $\CO Q_8$-modules of rank one, it follows 
that $S=$ $\CO Q_8e_1$ is the quotient algebra of $\CO Q_8$ 
corresponding to the unique irreducible character of $Q_8$ of degree 
$2$, hence isomorphic to the matrix algebra $M_2(\CO)$. 
The unique simple $B_1$-module (up to isomorphism) has dimension $2$. 
Thus, setting $\hat A=$ $C_{B_1}(S)=$ $B_1^{Q_8}$, we get from
\cite[(7.5) Proposition]{Thev} that
$$B_1 = S \tenO \hat A$$
and then necessarily $\hat A$ is a basic algebra of $B_1$, as its 
unique simple module has dimension $1$. We need to show that 
$\hat A$ has generators satisfying the relations as in the statement, 
and then we need to show that there are no other relations. We use the 
generators $g$, $h$, $s$, $t$ of the group $G$ with the relations as 
stated at the beginning of this section.  Define elements $\gamma$ and 
$\delta$ in $B_1$ by
$$\gamma = (g-g^{-1})te_1\ ,\ \ \ \ \delta = (h-h^{-1})se_1\ .$$ 
Note that $t$ commutes with $g$, $g^{-1}$, hence with $g-g^{-1}$. 
Similarly, $s$ commutes with $h$, $h^{-1}$, and with $h-h^{-1}$. 
We have
\begin{align*}
ste_1&=\half st(1-t^2)=\half s(t-t^{-1})\\
tse_1&=\half st^3(1-t^2)=\half s(t^{-1}-t)
\end{align*}
and hence $tse_1=-ste_1$. Using this equality, we 
verify that $se_1$ and $te_1$ commute with
$\gamma$ and $\delta$. We have
$$s(g-g^{-1})te_1=(g^{-1}-g)ste_1 = (g-g^{-1})tse_1$$
which shows that $se_1$ and $\gamma$ commute.
Similar calculations show the remaining commutation relations.
This shows that the elements $\gamma$ and $\delta$ are
in $\hat A$, and we need to show that they generate $\hat A$. Note that 
$g-g^{-1}=$ $(g^2-1)g^{-1}$ is a generator of 
$J(k\langle g\rangle)/J(k\langle g\rangle)^2$; similarly for $h$.
Thus $(g-g^{-1})e_1$ and $(h-h^{-1})e_1$ generate the radical modulo 
the radical square of the image of $\OP$ in $B_1$, hence
these two elements together with $e_1$ generate the algebra $\OP e_1$. 
The two elements $\gamma$ and $\delta$ are obtained by multiplying 
$(g-g^{-1})e_1$ and $(h-h^{-1})e_1$ by 
$te_1$ and $se_1$, respectively, and the two elements $se_1$ and $te_1$ 
generate $S$ as an $\CO$-algebra.
It follows that the set $\{se_1, te_1, \gamma, \delta\}$ generates 
$B_1$ as an $\CO$-algebra. But then $\gamma$ and $\delta$
necessarily generate $\hat A$ as a unitary algebra. 

\medskip
We verify that $\gamma$ and $\delta$ satisfy the relations as stated. 
We have 
\begin{align*}
\gamma\delta&=(g-g^{-1})t(h-h^{-1})se_1=-(g-g^{-1})(h-h^{-1})tse_1\\
&=(h-h^{-1})(g-g^{-1})ste_1=-(h-h^{-1})s(g-g^{-1})te_1=-\delta\gamma
\end{align*}
whence the anti-commutation relation for $\gamma$ and $\delta$.
For the remaining relations, we first consider the element
$g-g^{-1}$ in $\CO\langle g\rangle$. 
This element acts on any $\CO\langle g\rangle$-module of rank one
as multiplication by $\zeta-\zeta^{-1}$ for some $p$-th root
of unity $\zeta$. This is an imaginary number; writing
$\zeta=$ $e^{\frac{2\pi m}{p}}$ for some
integer $m$, we get that $\zeta-\zeta^{-1}=$ 
$2\sin(\frac{2\pi m}{p})\tau$, where $\tau$ satisfies $\tau^2=-1$.
Thus $\frac{\tau}{2}(g-g^{-1})$ acts as multiplication by
$-\sin(\frac{2\pi m}{p})$. Since $T_p$ involves only odd powers of
$x$, it follows that 
$T_p(\frac{\tau}{2}(g-g^{-1}))$ acts as multiplication by
$\pm\sin(p\frac{2\pi m}{p})=$ $0$, and hence 
$T_p(\frac{\tau}{2}(g-g^{-1}))=$ $0$ in $\CO\langle g\rangle$,
or equivalently, $f_p(\tau(g-g^{-1}))=$ $0$.
We calculate the odd powers of $\gamma$ and $\delta$.
For $n=$ $2m+1$ for some integer $m\geq 0$ we have 
$$(te_1)^n= t(t^2e_1)^m=(-1)^mte_1= \tau^{n-1}te_1\ ,$$ 
and hence we have
$$\gamma^n= (g-g^{-1})^nt^ne_1 = \tau^{n-1}(g-g^{-1})^nte_1=
\tau^{-1}(\tau(g-g^{-1}))^nte_1$$
Thus, using again that $f_p$ involves only odd powers of $x$, we have
\[ f_p(\gamma) = \tau^{-1}f_p(\tau(g-g^{-1}))te_1=0. \] 
A similar calculation
yields $f_p(\delta)=0$. This shows that $\gamma$ and $\delta$
satisfy the relations as stated. That is, $\hat A$ is
a quotient of the unitary $\CO$-algebra 
\[ C=\CO\langle \gamma, \delta \ |\ \gamma\delta+\delta\gamma=0,
\ f_p(\gamma)=0=f_p(\delta)\rangle. \] 
As an $\CO$-module,
$\hat A$ is free of rank $p^2$. The relations defining $C$
imply that $C$ is generated, as an $\CO$-module, by the images
of the $p^2$ monomials $\gamma^i\delta^j$, with $0\leq$ $i, j\leq$ 
$p-1$, and hence $C$ is, as an $\CO$-module, a quotient of a
free $\CO$-module of rank $p^2$. This forces $C\cong$ $\hat A$,
whence the result. 
\end{proof}

If $B$ is a nilpotent block of some finite group algebra, then the 
largest $\CO$-free commutative algebra quotient of a basic algebra of 
$B$ is symmetric. Indeed, in that case the basic algebras of $B$ are 
isomorphic to $\OQ$ for some defect group $Q$ of $B$, and the largest 
$\CO$-free commutative algebra quotient of $\OQ$ is the symmetric 
$\CO$-algebra $\CO Q/Q'$, where $Q'$ is the derived subgroup of $Q$. 
In \cite[Remark 1.3]{KLN}, the question was raised whether this property 
characterises nilpotent blocks. Some evidence for this comes from a 
theorem of Okuyama and Tsushima in \cite{OkTs} which states that $B$ has 
a commutative (and necessarily symmetric) basic algebra if and only if 
$B$ is nilpotent with abelian defect groups.
For the sake of testing this question, we calculate the largest
commutative $\CO$-algebra quotient of the basic algebra $\hat A$
of the non-principal (and non-nilpotent) block $B_1$ of $\OG$, and show 
that this is indeed not symmetric.

The irreducible characters of $B_1$ have degree either $2$ or $4$. Thus 
the simple $K\tenO \hat A$-modules have dimension either $1$ or $2$. 
The number of simple $K\tenO \hat A$-modules of dimension $1$ is equal 
to $2p-1$, and this is also equal to the $\CO$-rank of the largest 
$\CO$-free commutative quotient of $\hat A$. This quotient is of the 
form $\hat A/I$, where $I$ is the smallest $\CO$-pure ideal in $\hat A$ 
which contains $[\hat A,\hat A]$. Its structure is as follows.

\begin{Proposition} \label{Ahatcommutators}
Let $\hat A =$ $\CO\langle \gamma, \delta \ |\ \gamma\delta+\delta\gamma=0,
\ f_p(\gamma)=0=f_p(\delta)\rangle$ as in the previous theorem.
\begin{enumerate}
\item[\rm (i)]
The set
$\{\gamma^i\delta^j\ |\ 1\leq i,j\leq p-1,\ i\ \mathrm{or}\ j
\ \mathrm{odd}\}$
is an $\CO$-basis of $[\hat A,\hat A]$. 
\item[\rm (ii)]
The smallest $\CO$-pure ideal in $\hat A$ which contains $[\hat A,\hat A]$
is equal to $\hat A\gamma\delta=\gamma\delta\hat A$, and
the set $\{\gamma^i\delta^j\ |\ 1\leq i,j\leq p-1\}$
is an $\CO$-basis of this ideal.
\item[\rm (iii)]
The largest $\CO$-free commutative quotient of $\hat A$ is isomorphic to
$$D = 
\CO\langle \mu, \nu\ |\ \mu\nu=\nu\mu=0,\ f_p(\mu)=f_p(\nu)=0\rangle\ .$$
The set $\{1\}\cup\{\mu^i, \nu^i\ |\ 1\leq i\leq p-1\}$ is an 
$\CO$-basis of $D$; in particular, the $\CO$-rank of this quotient is 
$2p-1$. 
\item[\rm (iv)]
We have $k\tenO D\cong k\langle \mu,\nu\ |\ \mu\nu=\nu\mu=0,\
\mu^p=\nu^p=0\rangle$, and the $k$-algebra $k\tenO D$ is not 
symmetric; in particular, the $\CO$-algebra $D$ is not symmetric.
\end{enumerate}
\end{Proposition}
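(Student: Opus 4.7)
The plan is to work throughout with the $\CO$-basis $\{\gamma^i\delta^j\ :\ 0\le i,j\le p-1\}$ of $\hat A$ established in the previous theorem, exploiting the anticommutation $\delta\gamma=-\gamma\delta$. This yields the multiplication rule
$$(\gamma^a\delta^b)(\gamma^c\delta^d)=(-1)^{bc}\gamma^{a+c}\delta^{b+d},$$
interpreted as $0$ whenever $a+c\geq p$ or $b+d\geq p$. For (i), this gives
$$[\gamma^a\delta^b,\gamma^c\delta^d]=\bigl((-1)^{bc}-(-1)^{ad}\bigr)\gamma^{a+c}\delta^{b+d},$$
and a short parity check shows that whenever this is nonzero, at least one of $a+c,\ b+d$ is odd and both are strictly positive. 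Hence $[\hat A,\hat A]$ lies in the span of the displayed set. For the reverse inclusion, $[\delta,\gamma^i\delta^{j-1}]=-2\gamma^i\delta^j$ when $i$ is odd and $[\gamma,\gamma^{i-1}\delta^j]=2\gamma^i\delta^j$ when $j$ is odd; since $2\in\CO^\times$, each such monomial lies in $[\hat A,\hat A]$, and linear independence is inherited from the basis of $\hat A$.

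Part (ii) follows quickly: since $[\gamma,\delta]=2\gamma\delta$ with $2\in\CO^\times$, any $\CO$-pure ideal containing $[\hat A,\hat A]$ must contain $\gamma\delta$ and hence the two-sided ideal $\hat A\gamma\delta$. Using $\gamma^i\delta^j=\pm\gamma^{i-1}\delta^{j-1}\cdot\gamma\delta$ one checks that $\hat A\gamma\delta=\gamma\delta\hat A$ has $\CO$-basis $\{\gamma^i\delta^j\ :\ 1\le i,j\le p-1\}$; as these monomials form part of the basis of $\hat A$, the ideal is already $\CO$-pure, and it contains $[\hat A,\hat A]$ by (i). For (iii), set $I=\hat A\gamma\delta$. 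In $\hat A/I$ the images of $\gamma$ and $\delta$ satisfy the defining relations of $D$, yielding a surjection $D\to\hat A/I$; conversely, any $\CO$-free commutative quotient of $\hat A$ annihilates $2\gamma\delta=\gamma\delta-\delta\gamma$, hence annihilates $\gamma\delta$, and therefore factors through $\hat A/I$. The $2p-1$ monomials $\{1\}\cup\{\mu^i,\nu^i\ :\ 1\le i\le p-1\}$ span $D$ by its defining relations, and their images in $\hat A/I$ are $\CO$-linearly independent by (ii), so the surjection is an isomorphism.

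Finally, for (iv), reducing modulo the maximal ideal of $\CO$ and using the already-recorded fact that $f_p(u)\equiv u^p\pmod p$ yields the displayed presentation of $k\tenO D$. This is a split local $k$-algebra whose radical is generated by the images of $\mu$ and $\nu$. The crux of the non-symmetry is a direct socle calculation: an element $\alpha+\sum_{i\ge 1}a_i\mu^i+\sum_{j\ge 1}b_j\nu^j$ is annihilated by both $\mu$ and $\nu$ precisely when $\alpha=0$ and $a_i=b_i=0$ for $i\le p-2$ (using $\mu\nu=\nu\mu=0$ together with $\mu^p=\nu^p=0$), so $\soc(k\tenO D)=k\mu^{p-1}\oplus k\nu^{p-1}$ has dimension $2$. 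Since a split local symmetric $k$-algebra has one-dimensional socle (recorded in Section~2), $k\tenO D$ is not symmetric, and because symmetry of an $\CO$-free $\CO$-algebra descends to its reduction modulo the maximal ideal of $\CO$, neither is $D$. The main obstacle is the parity bookkeeping in part (i); once the exact description of $[\hat A,\hat A]$ is in hand, everything else unspools mechanically.
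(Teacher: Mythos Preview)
Your overall strategy is sound, but there is a genuine error at the very start that undermines part (i). You assert the multiplication rule
\[
(\gamma^a\delta^b)(\gamma^c\delta^d)=(-1)^{bc}\gamma^{a+c}\delta^{b+d},
\]
``interpreted as $0$ whenever $a+c\ge p$ or $b+d\ge p$.'' That is false in $\hat A$: the relation is $f_p(\gamma)=0$, not $\gamma^p=0$. Over $\CO$ the element $\gamma^p$ is a \emph{nonzero} $\CO$-linear combination of lower powers of $\gamma$ (with coefficients in $p\CO$), so products with exponents $\ge p$ do not vanish. You have silently replaced $\hat A$ by its reduction $A=k\tenO\hat A$. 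Consequently your containment $[\hat A,\hat A]\subseteq\operatorname{span}\{\gamma^i\delta^j: 1\le i,j\le p-1,\ i\text{ or }j\text{ odd}\}$ is not justified as written: a commutator such as $[\gamma^{p-1}\delta,\gamma]=-2\gamma^p\delta$ is nonzero and must be expanded in the monomial basis before one can check it lies in the span.

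Your argument can be repaired, but it needs the additional observation (recorded just before the theorem in the paper) that $f_p$ involves only \emph{odd} powers of $u$. This implies that in $\CO[\gamma]/(f_p(\gamma))$ the element $\gamma^n$ for $n\ge 1$ lies in the $\CO$-span of $\{\gamma^m: 1\le m\le p-1,\ m\equiv n\pmod 2\}$; in particular the reduction preserves the parity of the exponent and never introduces a constant term. With this, your parity check does yield the desired containment. By contrast, the paper sidesteps the reduction issue entirely: it argues that $K\tenO\hat A$ is split semisimple, so $\rk_\CO(Z(\hat A))+\rk_\CO([\hat A,\hat A])=\rk_\CO(\hat A)$; combined with the surjectivity of $Z(\hat A)\to Z(A)$ this gives $\rk_\CO([\hat A,\hat A])=\dim_k([A,A])$ and purity of $[\hat A,\hat A]$, whence the explicit commutators already exhibited (your reverse inclusion) suffice. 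Your direct approach, once patched, is more elementary and self-contained; the paper's rank-counting is shorter but imports the block-theoretic facts about $K\tenO\hat A$. The same oversight about $\gamma^p\ne 0$ also makes your verification that $\hat A\gamma\delta$ has the stated basis in (ii) incomplete, though it is again easily fixed by the same parity-of-$f_p$ remark; parts (iii) and (iv) are fine.
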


\begin{proof} 
The algebra $K\tenO \hat A$ is split semisimple, and hence
$\rk_\CO(Z(A))+\rk_\CO([\hat A, \hat A])=\rk_\CO(\hat A)$.
Since $A$ is symmetric, we have $\dim_k(Z(A))+\dim_k([A,A])=$
$\dim_k(A)$. Since the canonical map $Z(\hat A)\to$ $Z(A)$ is
surjective, it follows that $\rk_\CO([\hat A,\hat A])=
\dim_k([A,A])$. Thus $[\hat A,\hat A]$ is an $\CO$-pure
$\CO$-submodule of $\hat A$.
The same arguments is in the proof of Lemma
\ref{commutatorbasis} show that the set
$\{\gamma^i\delta^j\ |\ 1\leq i,j\leq p-1,\ i\ \mathrm{or}\ j
\ \mathrm{odd}\}$ is contained in $[\hat A, \hat A]$. This
set spans an $\CO$-pure $\CO$-submodule of $\hat A$ mapping
onto $[A, A]$, and hence this set is an $\CO$-basis of
$[\hat A,\hat A]$. This proves (i). The ideal generated by
the set $\{\gamma^i\delta^j\ |\ 1\leq i,j\leq p-1,\ i\ \mathrm{or}\ j
\ \mathrm{odd}\}$ contains the set 
$\{\gamma^i\delta^j\ |\ 1\leq i,j\leq p-1\}$. The $\CO$-span of the
latter is an ideal, whence (ii). It follows from (ii) that 
$A/A\gamma\delta$ is the largest $\CO$-free commutative quotient of 
$\hat A$. 
The relations of this quotient are obtained from those of $\hat A$,
whence (iii). The image of the polynomial $f_p(u)$ in $k[u]$ is
$x^p$. Thus the relations of $k\tenO D$ follow from those of $D$.
The socle of the $k$-algebra $k\tenO D$ contains the images of
$\mu^{p-1}$ and $\nu^{p-1}$, hence has dimension at least $2$.
Since $k\tenO D$ is local, this shows that $k\tenO D$ is not 
symmetric, and hence neither is $D$.
\end{proof}

%


\bibliographystyle{amsplain}
\newcommand{\noopsort}[1]{}
\providecommand{\bysame}{\leavevmode\hbox to3em{\hrulefill}\thinspace}
\providecommand{\MR}{\relax\ifhmode\unskip\space\fi MR }
\providecommand{\MRhref}[2]{%
  \href{http://www.ams.org/mathscinet-getitem?mr=#1}{#2}
}
\providecommand{\href}[2]{#2}

\end{document}